\DeclareMathAlphabet{\mathpzc}{OT1}{pzc}{m}{it}
\DeclareMathAlphabet{\mathpzc}{OT1}{pzc}{m}{it}
\newtheorem{Definition}{Definition}[section]
\newtheorem{Theorem}[Definition]{Theorem}
\newtheorem{Lemma}[Definition]{Lemma}
\theoremstyle{remark}
\newtheorem{Remark}[Definition]{Remark}
\DeclareMathOperator\B{\mathbf{B}}
\DeclareMathOperator\C{\mathbf{C}}
\DeclareMathOperator\D{\mathbf{D}}
\DeclareMathOperator\Q{\mathbf{Q}}
\DeclareMathOperator\R{\mathbf{R}}
\DeclareMathOperator\Z{\mathbf{Z}}
\DeclareMathOperator\calD{\mathcal{D}}
\DeclareMathOperator\calL{\mathcal{L}}
\DeclareMathOperator\calO{\mathcal{O}}
\DeclareMathOperator\calR{\mathcal{R}}
\DeclareMathOperator\frakp{\mathfrak{p}}
\DeclareMathOperator\GL{GL}
\DeclareMathOperator\Hom{Hom}
\DeclareMathOperator\cl{cl}
\DeclareMathOperator\coker{coker}
\DeclareMathOperator\cris{cris}
\DeclareMathOperator\cyc{cyc}
\DeclareMathOperator\dR{dR}
\DeclareMathOperator\Fil{Fil}
\DeclareMathOperator\FM{FM}
\DeclareMathOperator\Gal{Gal}
\DeclareMathOperator\GB{GB}
\DeclareMathOperator\Gr{Gr}
\DeclareMathOperator\id{id}
\DeclareMathOperator\Ind{Ind}
\DeclareMathOperator\image{image}
\DeclareMathOperator\rank{rank}
\DeclareMathOperator\rig{rig}
\DeclareMathOperator\st{st}
\DeclareMathOperator\Mod{\mathbf{\mathsf{Mod}}}
\DeclareMathOperator\Rep{\mathbf{\mathsf{Rep}}}
\newcommand\scalemath[2]{\scalebox{#1}{\mbox{\ensuremath{\displaystyle #2}}}}
\renewcommand{\maketitle}{\bgroup\setlength{\parindent}{0pt}
\begin{flushleft}
  \LARGE{\textbf{\@title}}
  
  \vspace{4mm}
  
  \large{\textsc{\@author}} \hfill \normalfont{\text{\@date}}
  
  \vspace{4mm}
\end{flushleft}\egroup
}
\title{On Greenberg--Benois $\mathcal{L}$-invariants and Fontaine--Mazur $\mathcal{L}$-invariants}
\author{Ju-Feng Wu}
\date{}
\begin{document}

\maketitle

{\footnotesize \noindent \textbf{Abstract.} We prove a comparison theorem between Greenberg--Benois $\calL$-invariants and Fontaine--Mazur $\calL$-invariants. Such a comparison theorem supplies an affirmative answer to a speculation of Besser--de Shalit.}

\tableofcontents

\section{Introduction}\label{section: intro}

Let $f$ be a cuspidal normalised newform of weight $2k$ and level $\Gamma_0(pN)$, where $p$ is a prime number and $N\in \Z_{>0}$ such that $p\nmid N$. Consider the complex $L$-function attached to $f$ \[
    L(f, s) = \sum_{n=1}^{\infty} a_n n^{-s}. 
\] 
One way to study $L(f, s)$ is via $p$-adic method. That is, one can associate $f$ with a \emph{$p$-adic $L$-function} $L_p(f, s)$, which $p$-adically interpolate the algebraic part of the special values $L(f, j)$ for $1\leq j\leq 2k-1$. In particular, the interpolation property at $s= k$ is given by the formula \[
    L_p(f, k) = \left( 1- \frac{p^{k-1}}{a_p}\right)\frac{L(f, k)}{\Omega_f},
\] where $\Omega_f$ is the Deligne period of $f$ at $k$ (\cite{Deligne}). 

Suppose moreover that $a_p = p^{k-1}$, the formula above shows that $L_p(f, s)$ vanishes at $s=k$. In the case when $k=1$, Mazur--Tate--Teitelbaum conjectured in \cite{MTT} that there exists an invariant $\calL(f)$ such that \[
    \frac{d}{ds} L_p(f, s)|_{s=k} = \calL(f) \frac{L(f, k)}{\Omega_f}.
\] This conjecture is known as the \emph{trivial zero conjecture} and has been proven by Greenberg--Stevens in \cite{Greenberg--Stevens}. Moreover, for higher weights, various generalisations of the invariant $\calL(f)$ has been proposed. The following is an incomplete list: \begin{enumerate}
    \item[$\bullet$] In \cite{Greenberg}, R. Greenberg constructed the $\calL$-invariants for Galois representations that are ordinary at $p$ and suggested a generalisation of the trivial zero conjecture. 
    \item[$\bullet$] In \cite{Mazur}, Fontaine--Mazur defined the $\calL$-invariant by studying the semistable module (à la Fontaine) associated with a $p$-adic representation. 
    \item[$\bullet$] In \cite{Coleman-padicShimura}, R. Coleman proposed a construction of $\calL$-invariants as an application of his $p$-adic integration theory. 
    \item[$\bullet$] In \cite{Teitelbaum}, J. Teitelbaum proposed a construction of $\calL$-invariants by applying the $p$-adic integration theory to $p$-adically uniformised Shimura curve. 
\end{enumerate}
All these $\calL$-invariants are known to be equal: Coleman--Iovita compared the second and the third in \cite{Coleman--Iovita}; Iovita--Spieß compared the second and the fourth in \cite{Iovita-Spiess}; and the comparison between the first and the second is a special case of \cite[Proposition 2.3.7]{Benois}.

It is a natural question to ask whether one can establish a similar philosophy for higher rank automorphic forms. Let us mention the following generalisations in our consideration: \begin{enumerate}
    \item[$\bullet$] In \cite{Benois}, D. Benois generalised Greeberg's construction to Galois representations of $\Gal_{\Q}$ that satisfies some reasonable conditions. He also stated a trivial zero conjecture in such a generality ([\emph{op. cit.}, pp. 1579]).
    \item[$\bullet$] In \cite{Besser--de_Shalit}, Besser--de Shalit generalised both the Fontaine--Mazur $\calL$-invariants and Coleman (or Teitelbaum) $\calL$-invariants by studying the $p$-adic cohomology groups of $p$-adically uniformised Shimura varieties. It is conjectured in \emph{loc. cit.} that these two constructions give rise to the same $\calL$-invariants (or $\calL$-operators as called in \emph{loc. cit.}). Authors of \emph{loc. cit.} also speculated that the existence of a \emph{trivial zero conjecture} for these two $\calL$-invariants. However, they were not able to provide an explicit statement.  
\end{enumerate}

This article concerns the comparison between Benois's $\calL$-invariants and the Fontaine--Mazur type $\calL$-invariants of Besser--de Shalit. To explain our result, let us fix some notations: Let $F$ be a number field such that for every prime ideal $\frakp\subset \calO_F$ sitting above $p$, the maximal unramified extension of $\Q_p$ in $F_{\frakp}$ is $\Q_p$ itself; let $E$ be a large enough value field that is a finite extension over $\Q_p$. Suppose $$
    \rho : \Gal_F \rightarrow \GL_n(E)
$$ is a Galois representation that is semistable at places above $p$. We further assume that $\rho$ satisfies the assumptions in \S \ref{subsection: assumptions}. In particular, we assume the Frobenius eigenvalues on the associated semistable modules are given by $p^m, ..., p^{m-n+1}$ (for some suitable $m\in \Z$ that is independent of the prime ideals sitting above $p$) and the monodromy is maximal. We remark in the beginning that these assumptions are required so that we can perform the following two constructions: \begin{enumerate}
    \item[$\bullet$] Following the suggestion in \cite{Rosso} (see also \cite{Hida-HilberLinvariant}), one can consider the induction $\Ind^{\Q}_F \rho$. Part of the assumptions then allows us to attach the $\calL$-invariant in Benois's style to $\Ind_{F}^{\Q}\rho(m)$. This resulting $\calL$-invariant is denoted by $\calL_{\GB}(\rho(m))$, where the subscript $\GB$ stands for `Greenberg--Benois'. We refer the readers to \S \ref{section: GB} for the construction of $\calL_{\GB}$. 
    \item[$\bullet$] We realised that the generalisation of Fontaine--Mazur $\calL$-invariants suggested by Besser--de Shalit can be translated to the world of semistable modules of a local Galois representation. The other part of the assumptions in \S \ref{subsection: assumptions} then allow us to attach $\calL$-invariants of Fontaine--Mazur style to each local Galois representation $\rho_{\frakp} = \rho|_{\Gal_{F_{\frakp}}}$ for every prime $\frakp$ above $p$. We term such $\calL$-invariants $\calL_{\FM}(\rho_{\frakp})$, where the subscript $\FM$ stands for `Fontaine--Mazur'. We refer the readers to \S \ref{section: FM} for the construction of $\calL_{\FM}$.
\end{enumerate}

Our main result reads as follows. 
\begin{Theorem}[Theorem \ref{Theorem: main; comparison}]
    We have an equality \[
        \calL_{\GB}(\rho(m)) = \prod_{\frakp | p} - \calL_{\FM}(\rho_{\frakp}),
    \]
    where the index set runs through all prime ideals in $\calO_F$ sitting above $p$
\end{Theorem}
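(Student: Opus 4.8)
The plan is to unwind both invariants into explicit linear‑algebra data attached to semistable modules, and to reduce the global identity to a family of purely local comparisons indexed by the primes of $F$ above $p$. The construction of $\calL_{\GB}$ in \S\ref{section: GB} equips $\Ind_F^\Q\rho(m)$ with a regular submodule and extracts $\calL_{\GB}(\rho(m))$ as the determinant of a transition map on the associated exceptional $(\varphi,\Gamma)$-subquotient; under the hypotheses of \S\ref{subsection: assumptions} the global Bloch--Kato Selmer group of $\Ind_F^\Q\rho(m)$ and that of its Tate dual are as small as possible, so Benois's machine is non‑degenerate and this invariant depends only on the local datum at $p$, namely the semistable module of $\bigl(\Ind_F^\Q\rho\bigr)(m)\big|_{\Gal_{\Q_p}}$ together with its Hodge filtration.

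The first step is the Mackey decomposition
\[
    \bigl(\Ind_F^\Q\rho\bigr)(m)\big|_{\Gal_{\Q_p}}\;\cong\;\bigoplus_{\frakp\mid p}\Ind_{F_\frakp}^{\Q_p}\bigl(\rho_\frakp(m)\bigr),
\]
the double cosets $\Gal_F\backslash\Gal_\Q/\Gal_{\Q_p}$ being in bijection with the primes $\frakp\mid p$. Because each $F_\frakp/\Q_p$ is totally ramified by hypothesis, the functor $D_{\st}^{\Q_p}\circ\Ind_{F_\frakp}^{\Q_p}$ coincides with $D_{\st}^{F_\frakp}$ on the level of $(\varphi,N)$-modules --- both are $\Q_p$-vector spaces and there is no unramified part to track --- the Hodge filtration being transported by the standard induction formula for de Rham periods. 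Hence the semistable module, its monodromy filtration, the regular submodule and the exceptional subquotient all decompose as orthogonal direct sums over $\frakp\mid p$, and since the determinant defining Benois's $\calL$-invariant is multiplicative across such sums one gets
\[
    \calL_{\GB}(\rho(m))\;=\;\prod_{\frakp\mid p}\calL_B\bigl(\Ind_{F_\frakp}^{\Q_p}\rho_\frakp(m)\bigr),
\]
where $\calL_B(-)$ is Benois's local $\calL$-invariant. It thus suffices to prove, for each $\frakp$, the local identity $\calL_B\bigl(\Ind_{F_\frakp}^{\Q_p}\rho_\frakp(m)\bigr)=-\calL_{\FM}(\rho_\frakp)$; the twist by $m$ has been absorbed on the left but not on the right, which is legitimate because $\calL_{\FM}$ is invariant under the crystalline twist relating $\rho_\frakp(m)$ and $\rho_\frakp$.

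The heart of the argument is this local comparison, which I would model on \cite[Proposition 2.3.7]{Benois} --- the rank‑two ordinary comparison with the Fontaine--Mazur invariant --- now in the semistable setting with maximal monodromy. The assumptions that the Frobenius eigenvalues on $D_{\st}^{F_\frakp}(\rho_\frakp(m))$ are consecutive powers of $p$ and that $N$ is a single Jordan block force the exceptional $(\varphi,\Gamma)$-module $W$ attached to the data to be an iterated extension governed entirely by $N$; its cohomology $H^1(W)$ then carries two canonical subspaces of equal dimension $n-1$ --- the Bloch--Kato finite part and a complement --- each with a basis read off from the $\varphi$-eigenlines of $D_{\st}^{F_\frakp}(\rho_\frakp)$. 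I would compute Benois's transition map $\tau$ between these two bases explicitly, via the Bloch--Kato exponential and its dual, exactly as in the rank‑two case; the matrix one obtains is, entry by entry, the matrix expressing the Hodge flag of $D_{\st}^{F_\frakp}(\rho_\frakp)$ in terms of the monodromy flag, which is precisely what computes $\calL_{\FM}(\rho_\frakp)$ after the translation of Besser--de Shalit's definition carried out in \S\ref{section: FM}. The overall sign $-1$ is the one already present in the rank‑two case: it originates in the normalisation of the canonical generator of $H^1(\Q_p,\Q_p(1))$ against which the dual exponential is taken, and it enters once --- not as a power $(-1)^{n-1}$ --- because it is attached to a single distinguished step of $W$.

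The main obstacle is precisely this last computation and its bookkeeping. One must (i) verify that transporting the Hodge filtration through $\Ind_{F_\frakp}^{\Q_p}$ from a ramified base introduces no spurious factor, the potential contribution of the different of $F_\frakp/\Q_p$ being absorbed thanks to the consecutive‑eigenvalue normalisation and the twist by $m$; (ii) match the indexing of Benois's regular‑submodule filtration with the weight/monodromy filtration used by Besser--de Shalit, so that the two determinants are literally the determinant of the same change‑of‑basis matrix rather than of a transposed or permuted variant; and (iii) pin the sign down uniformly as a single $-1$ per prime, which requires care with the normalisation of the $\calL$-operator in \S\ref{section: FM} relative to Benois's $\calL$. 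One also has to confirm that $\calL_{\GB}(\rho(m))$ is independent of the auxiliary choice of regular submodule --- here the maximality of the monodromy, rather than a mere convenience, is genuinely used, since it rigidifies the relevant flag and thereby makes the transition determinant well defined.
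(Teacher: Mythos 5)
Your global skeleton (Mackey decomposition, reduction to one statement per prime $\frakp\mid p$, a local comparison modelled on \cite[Proposition 2.3.7]{Benois}) is the same as the paper's, but the local step as you describe it rests on a structural misconception. Benois's exceptional subquotient only retains the parts of $\D_{\st}(\rho_{\frakp}(m))$ on which $\varphi$ acts by $1$ and $p^{-1}$: with the regular submodule forced by the hypotheses, $D_{\frakp}=\Fil_{n-1}^{\varphi}\D_{\st}(\rho_{\frakp}(m))$ (Remark \ref{Remark: regular submodule in our case}), the five-step filtration collapses so that $W_{\frakp}$ is a rank-two extension of two rank-one $(\varphi,\Gamma)$-modules as in \eqref{eq: dual short exact sequence}, and the per-prime contribution to $\calL_{\GB}$ is a single scalar, not the determinant of an $(n-1)\times(n-1)$ transition matrix between the Hodge flag and the monodromy flag. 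The ``two canonical subspaces of dimension $n-1$'' you posit do not occur; a determinant comparing the full flags would instead involve the higher Besser--de Shalit operators $\calL_{\FM}^{(i)}$ of Remark \ref{Remark: Besser--de Shalit's version of FM L-invariants}, whereas only $\calL_{\FM}=\calL_{\FM}^{(1)}$ (the step between the $\varphi$-eigenvalues $1$ and $p^{-1}$) enters the theorem. Your conclusion that the sign appears once per prime is correct, but the reason you give (a single distinguished step inside an $(n-1)$-dimensional computation) is not the mechanism.

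What is actually needed, and is absent from your sketch, is the explicit identification that does the comparison: the paper realises $\calL_{\FM}(\rho_{\frakp})$ cohomologically by taking the rank-two subquotient $\widetilde W_{\frakp}=\Fil_n\D_{\rig}^{\dagger}(\rho)/\Fil_{n-2}\D_{\rig}^{\dagger}(\rho)$ of the triangulation attached to the Frobenius filtration and computing its class in the semistable complex, yielding $\cl(\widetilde W_{\frakp})=\cl(-\calL_{\FM}(\rho_{\frakp})v_{\frakp,1},0,v_{\frakp,1})$ as in \eqref{eq: main computation in Step 1}; on the other side, Lemma \ref{Lemma: unqiue number defined by a semistable extension} attaches to $W_{\frakp}$ the unique scalar $\calL(W_{\frakp})$ with $\beta^{*}_{\delta'_{\frakp,1}}+\calL(W_{\frakp})\alpha^{*}_{\delta'_{\frakp,1}}\in\image\partial$, and the equality $\cl(W_{\frakp})=\cl(\widetilde W_{\frakp})$ in $H^1(\calR_{F_{\frakp},E}(\delta_{\frakp,1}\delta_{\frakp,0}^{-1}))$ forces $\calL_{\FM}(\rho_{\frakp})=-\calL(W_{\frakp})$. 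Moreover, the multiplicativity you treat as formal (``the determinant is multiplicative across direct sums'') is where most of the work lies: identifying the per-prime Greenberg--Benois number, defined via $\varrho_{D,f}\circ\varrho_{D,c}^{-1}$ on cohomology over $\calR_{\Q_p,E}$, with the scalar $\calL(W_{\frakp})$ defined over $\calR_{F_{\frakp},E}$ requires Rosso's explicit classes $x,y$, the induction (Shapiro) comparison, and a duality of boundary sequences, culminating in \eqref{eq: LGB as a product}. Finally, independence of the choice of regular submodule is neither claimed nor needed: the theorem is stated for the specific $D_{\frakp}=\Fil_{n-1}^{\varphi}\D_{\st}(\rho_{\frakp}(m))$, so that part of your outline is moot.
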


Since there is a well-stated trivial zero conjecture for $\calL_{\GB}(\rho(m))$ in \cite{Benois}, our result immediately supplies an affirmative answer to Besser--de Shalit's speculation of the relationship between their $\calL$-invariants and $p$-adic $L$-functions. 

To close this introduction, let us mention that the generalisation of $\calL$-invariants à la Coleman (or Teitelbaum) suggested by Besser--de Shalit replaces Coleman's integration theory with Besser's theory of finite polynomial cohomology. Although they only consider the case for the trivial coefficient (so that we can only see automorphic forms of weight associated with the differential 1-forms), one can hope a generalisation for non-trivial coefficients by using \emph{finite polynomial cohomology with coefficients} (\cite{HW-fpcohomology}). We wish to come back to this in future projects and hopefully to compare this type of $\calL$-invariants with $\calL_{\FM}(\rho_{\frakp})$ as suggested in \cite{Besser--de_Shalit}.

\subsection*{Acknowledgements} 
This paper grew from a working group with Ting-Han Huang, Martí Roset Julià, and Giovanni Rosso and I would like to thank them for interesting discussions. I especially thank Givanni Rosso for valuable feedback regarding the early draft of this paper. I also thank David Loeffler for pointing out a mistake in an early version of this paper. I would also like to thank Muhammad Manji for interesting discussions on $(\varphi, \Gamma)$-modules. Part of the work was done when I was visiting National Center for Theoretical Sciences in Taipei; I would like to thank the hospitality of the institute and Ming-Lun Hsieh. Finally, I thank the anonymous referees for their corrections and valuable suggestions, which helped to improve the exposition of the article. This work is supported by the ERC Consolidator grant `Shimura varieties and the BSD conjecture' and the Irish Research Council under grant number IRCLA/2023/849 (HighCritical).

\subsection*{Notations}
\begin{enumerate}
    \item[$\bullet$] Through out this article, we fix a prime number $p$. 
    \item[$\bullet$] Given a field $F$, we fix a separable closure $\overline{F}$ and denote by $\Gal_F = \Gal(\overline{F}/F)$ its absolute Galois group. 
\end{enumerate}

\section{Preliminaries on \texorpdfstring{$(\varphi, \Gamma)$}{(phi, Gamma)}-modules}\label{section: (phi, Gamma)}

\subsection{General \texorpdfstring{$(\varphi, \Gamma)$}{(phi, Gamma)}-modules}\label{subsection: (phi, Gamma)}
Fix a compatible system of primitive $p$-power roots of unity $(\zeta_{p^n})_{n\in \Z_{\geq 0}}$ in $\overline{\Q}_p$. Given a finite extension $K$ of $\Q_p$, consider $K(\zeta_{p^{\infty}}) = \bigcup_{n\in \Z_{\geq 0}} K(\zeta_{p^n})$ and denote by $\Gamma = \Gamma_K$ the Galois group $\Gal(K(\zeta_{p^{\infty}})/K)$. Moreover, for any $r\in [0, 1)$, let \[
    \calR_K^r := \left\{ f = \sum_{i\in \Z}a_i T^i: \begin{array}{l}
        a_i \in K^{\mathrm{unr}} \cap K(\zeta_{p^{\infty}}) \\
        f\text{ is holomorphic on the annulus } r\leq |T| <1
    \end{array}\right\}
\] and \[
    \calR_K := \bigcup_{r\in [0, 1)} \calR_K^r,
\]
where $K^{\mathrm{unr}}$ is the maximal unramified extension of $K$ in $\overline{\Q}_p$ and the infinite union is taken with respect to the inclusions $\calR_K^{r} \hookrightarrow \calR_K^{r'}$ for $r\leq r'<1$. We call the ring $\calR_K$ the \emph{Robba ring} over $K$. It carries a $\varphi$-action and a $\Gamma$-action via the formula \[
    \varphi(T) = (1+T)^p -1 \quad \text{ and }\quad \gamma(T) = (1+T)^{\chi_{\cyc}(\gamma)}-1 \text{ for any }\gamma \in \Gamma,
\]
where $\chi_{\cyc}$ is the cyclotomic character.

In what follows, we shall consider a more generalised version of $\calR_K$. Let $E$ be a finite extension of $\Q_p$, we denote by $\calR_{K, E} := \calR_K \otimes_{\Q_p}E$ and call it the \emph{Robba ring over $K$ with coefficients in $E$}. We linearise the actions of $\varphi$ and $\Gamma$ on $\calR_{K, E}$ via $\varphi \otimes \id$ and $\gamma \otimes \id$ respectively. In what follows, we often assume $E$ is large enough so that $K \subset E$. 

By a \emph{$(\varphi, \Gamma)$-module} over $\calR_{K, E}$, we mean a finite free $\calR_{K, E}$-module $D$ together with a $\varphi$-semilinear endomorphism $\varphi_D$ and a semilinear action by $\Gamma$, which commute with each other, such that the induced map \[
    \varphi_D: \varphi^*D = D \otimes_{\varphi}\calR_{K, E} \rightarrow D
\] is an isomorphism. We shall denote by $\Mod_{\calR_{K, E}}^{(\varphi, \Gamma)}$ the category of $(\varphi, \Gamma)$-modules over $\calR_{K, E}$.

Let $\Rep_{K}(E)$ the category of Galois representations of $\Gal_K$ with coefficients in $E$. Then, by \cite[Proposition 1.1.4]{Benois}, there is a fully faithful functor \[
    \D_{\rig}^{\dagger} : \Rep_{K}(E) \rightarrow \Mod_{\calR_{K, E}}^{(\varphi, \Gamma)}.
\] 
Moreover, by letting $\Mod_{K, E}^{(\varphi, N)}$ (resp., $\Mod_{K, E}^{\varphi}$) the category of $(\varphi, N)$-modules (resp., $\varphi$-modules) over $K_0 = K \cap \Q_p^{\mathrm{unr}}$ with coefficients in $E$, there is a functor (see, for example, \cite[\S 1.2.3]{Benois}) \[
    \calD_{\st}: \Mod_{\calR_{K, E}}^{(\varphi, \Gamma)} \rightarrow \Mod_{K, E}^{(\varphi, N)} \quad (\text{resp., }\calD_{\cris}: \Mod_{\calR_{K, E}}^{(\varphi, \Gamma)} \rightarrow \Mod_{K, E}^{\varphi})
\] such that if $\rho \in \Rep_{K}(E)$ is semistable (resp., crystalline), then (\cite[Théorème 0.2]{Berger-differential}) \[
    \calD_{\st}(\D_{\rig}^{\dagger}(\rho)) = \D_{\st}(\rho) \quad (\text{resp., } \calD_{\cris}(\D_{\rig}^{\dagger}(\rho)) = \D_{\cris}(\rho)).
\] 
Here $\D_{\st}$ (resp., $\D_{\cris}$) is Fontaine's semistable (resp., crystalline) functor (\cite{Fontaine-period, Berger-differential}), assigning a Galois representation in $\Rep_{K}(E)$ a $(\varphi, N)$-module (resp., $\varphi$-module) over $K_0$ with coefficients in $E$.

Now, let $D$ be a $(\varphi, \Gamma)$-module over $\calR_{K, E}$. Recall the cohomology of $D$ is defined by the cohomology of the \emph{Herr complex} \[
    0 \rightarrow D \xrightarrow{x \mapsto ((\varphi_D-1)x, (\gamma -1)x)} D \oplus D \xrightarrow{(x,y) \mapsto (\gamma -1)x - (\varphi_D -1)y} D \rightarrow 0,
\]
where $\gamma$ is a (fixed) topological generator of $\Gamma$. Note that, given $\alpha = (x, y)\in D \oplus D$ such that  $(\gamma -1)x -(\varphi_D -1)y = 0$, there is an extension \[
    0 \rightarrow D \rightarrow D_{\alpha} \rightarrow \calR_{K, E} \rightarrow 0
\] defined by \begin{equation}\label{eq: construction of an extension}
    D_{\alpha} = D \oplus \calR_{K, E}e, \quad (\varphi_{D_{\alpha}}-1)e = x, \quad (\gamma -1)e = y.
\end{equation}
It turns out that such an assignment gives rise to an isomorphism \[
    H^1(D) \cong \mathrm{Ext}_{(\varphi, \Gamma)}^1(\calR_{K, E}, D).
\]
Furthermore, we write $H^1_{\st}(D)$ (resp., $H^1_f(D)$) the subspace of $H^1(D)$, consisting of those semistable (resp., crystalline) extensions $D_{\alpha}$, \emph{i.e.}, those satisfy $\rank_{K_0\otimes_{\Q_p}E}\calD_{\st}(D_{\alpha}) = \rank_{K_0 \otimes_{\Q_p} E} \calD_{\st}(D) +1$ (resp., $\rank_{K_0\otimes_{\Q_p}E}\calD_{\cris}(D_{\alpha}) = \rank_{K_0 \otimes_{\Q_p} E} \calD_{\cris}(D) +1$). According to \cite[Proposition 1.4.2]{Benois}, if $\rho\in \Rep_{K}(E)$, then \[
    H^1_{\st}(\D_{\rig}^{\dagger}(\rho)) \cong H^1_{\st}(K, \rho) \quad (\text{resp., } H^1_{f}(\D_{\rig}^{\dagger}(\rho)) \cong H_f^1(K, \rho)),
\]
where \begin{align*}
    H_{\st}^1(K, \rho) &= \ker\left(H^1(K, \rho) \rightarrow H^1(K, \rho\otimes_{\Q_p}\B_{\st})\right) \\
    (\text{resp., }H_{f}^1(K, \rho) &= \ker\left(H^1(K, \rho) \rightarrow H^1(K, \rho\otimes_{\Q_p}\B_{\cris})\right))\footnote{ Here, $\B_{\st}$ and $\B_{\cris}$ are, respectively, Fontaine's semistable and crystalline period rings. }
\end{align*}
is the usual local Bloch--Kato Selmer group. 

To conclude our discussion for general $(\varphi, \Gamma)$-modules, we mention that, if $D$ is semistable,\footnote{ In fact, the condition can be loosen to being \emph{potentially semistable}, but we do not need such a generality here. } then $H_{\st}^1(D)$ and $H_f^1(D)$ can be computed by complexes $C_{\st}^{\bullet}$ and $C_{\cris}^{\bullet}$ respectively (\cite[Proposition 1.4.4]{Benois}). Here, \[
    C_{\st}^{\bullet}(D) = \left[ \begin{tikzcd}
        \calD_{\st}(D) \arrow[rrr, "\text{$a\mapsto (a, (\varphi-1)a, N(a))$}"] &&& \frac{\calD_{\st}(D)}{\Fil_{\dR}^0\calD_{\st}(D)} \oplus \calD_{\st}(D) \oplus \calD_{\st}(D) \arrow[d,"\text{$(a,b,c)\mapsto N(b)-(p\varphi-1)c$}"] \\ &&& \calD_{\st}(D)
    \end{tikzcd} \right]
\]
and 
\begin{align*}
    C_{\cris}^{\bullet}(D) & = \left[\calD_{\cris}(D) \xrightarrow{a\mapsto (a, (\varphi-1)a)} \frac{\calD_{\cris}(D)}{\Fil_{\dR}^0\calD_{\cris}(D)} \oplus \calD_{\cris}(D) \right].
\end{align*}

\subsection{\texorpdfstring{$(\varphi, \Gamma)$}{(phi, Gamma)}-modules of rank 1}\label{subsection: rank 1 (phi, Gamma)}
Recall that $(\varphi, \Gamma)$-modules of rank $1$ can be understood via continuous characters. More precisely, given a continuous character $\delta: K^\times \rightarrow E^\times$ and fix a uniformiser $\varpi\in K$, we can write $\delta = \delta' \delta''$ with $\delta'|_{\calO_{K}^{\times}} = \delta|_{\calO_{K}^{\times}}$, $\delta'(\varpi) = 1$ and $\delta''(\varpi) = \delta(\varpi)$, $\delta''|_{\calO_K^{\times}} = 1$. By local class field theory, $\delta'$ defines a unique one-dimensional Galois representation $\chi_{\delta'}$, \emph{i.e.}, \[
    \chi_{\delta'}: \Gal_K \xrightarrow{\text{local Artin map}} \widehat{K^\times} \cong \calO_K^\times \times \widehat{\Z} \xrightarrow{(a, b)\mapsto \delta'(a)} E^\times, \footnote{ Here, $\widehat{K^\times}$ is the profinite completion of $K^\times$. Note that the isomorphism $\widehat{K^\times} \cong \calO_K^\times \times \widehat{\Z}$ depends on the choice of $\varpi$, which is fixed. }
\] which admits its associated $(\varphi, \Gamma)$-module $\D_{\rig}^{\dagger}(\chi_{\delta'})$. On the other hand, we define $\calR_{K, E}(\delta'') = \calR_{K, E}e_{\delta''}$ such that $\varphi(e_{\delta''}) = \delta(\varpi)e_{\delta''}$ and $\gamma(e_{\delta''}) = e_{\delta''}$. Then, the $(\varphi, \Gamma)$-module associated with $\delta$ is defined to be \[
    \calR_{K, E}(\delta) := \calR_{K, E}(\delta'') \otimes_{\calR_{K, E}}\D_{\rig}^{\dagger}(\chi_{\delta'}).
\]

In particular, the cyclotomic character $\Gal_{K} \rightarrow \calO_{E}^\times$ has the associated $(\varphi, \Gamma)$-module $\D_{\rig}^{\dagger}(\chi_{\cyc})$. By \cite[Lemma 2.13]{Nakamura}, we know that \[
    \D_{\rig}^{\dagger}(\chi_{\cyc}) = \calR_{K, E}(\mathrm{Nm}^{K}_{\Q_p}(z)|\mathrm{Nm}^{K}_{\Q_p}(z)|),
\] where $\mathrm{Nm}^K_{\Q_p}$ is the norm function from $K$ to $\Q_p$.

\begin{Lemma}\label{Lemma: H1st = H1 for a special character}
    Let $\delta: K^\times \rightarrow E^\times$ be the character \[
        \delta(z) = \left( \prod_{\sigma: K \hookrightarrow \overline{\Q}_p} \sigma(z)^{m_{\sigma}}\right) \left| \mathrm{Nm}_{\Q_p}^{K} (z)\right|
    \] such that all $m_{\sigma} \geq 1$. \begin{enumerate}
        \item[$\bullet$] If  $\left(\calD_{\st}(\calR_{K, E}(\delta))^{\vee}(\chi_{\cyc}) \right)^{\varphi =1}$ is nonzero, then the inclusion $H_{\st}^1(\calR_{K, E}(\delta)) \hookrightarrow H^1(\calR_{K, E}(\delta))$ is an isomorphism. 
        \item[$\bullet$] If $\left(\calD_{\st}(\calR_{K, E}(\delta))^{\vee}(\chi_{\cyc}) \right)^{\varphi =1} =0 $, then the inclusion $H_f^1(\calR_{K, E}(\delta)) \hookrightarrow H_{\st}^1(\calR_{K, E}(\delta))$ is an isomorphism. 
    \end{enumerate}
\end{Lemma}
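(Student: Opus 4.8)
The plan is a direct dimension count over $E$, in the spirit of \cite[\S 1.4--1.5]{Benois}. Write $D := \calR_{K,E}(\delta)$. First observe that $D$ is crystalline: writing $\delta = \delta_{\mathrm{alg}}\,\delta_{\mathrm{sm}}$ with $\delta_{\mathrm{alg}}(z) = \prod_{\sigma}\sigma(z)^{m_\sigma}$ and $\delta_{\mathrm{sm}}(z) = |\mathrm{Nm}_{\Q_p}^{K}(z)|$, the character $\delta_{\mathrm{sm}}$ is trivial on $\calO_K^\times$, so $D$ is an algebraic twist of a rank-one unramified $(\varphi,\Gamma)$-module. In particular $D$ is semistable with $N = 0$ (it has rank one), so $H^1_{\st}(D)$ and $H^1_f(D)$ are computed by $C^\bullet_{\st}(D)$ and $C^\bullet_{\cris}(D)$. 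The strategy is to compute the three numbers $\dim_E H^1(D)$, $\dim_E H^1_{\st}(D)$, $\dim_E H^1_f(D)$ and compare them using the inclusions $H^1_f(D)\subseteq H^1_{\st}(D)\subseteq H^1(D)$.

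The next step identifies the small cohomology and the Hodge filtration. Using $\D^{\dagger}_{\rig}(\chi_{\cyc}) = \calR_{K,E}(\mathrm{Nm}_{\Q_p}^{K}(z)\,|\mathrm{Nm}_{\Q_p}^{K}(z)|)$ one computes $D^\vee(\chi_{\cyc}) = \calR_{K,E}\bigl(\prod_{\sigma}\sigma(z)^{1-m_\sigma}\bigr)$; since all $m_\sigma\geq 1$, this module has all Hodge--Tate weights $\geq 0$, so its $\Fil^0_{\dR}$ is the whole de Rham module, whereas dually $D$ has all Hodge--Tate weights $\leq -1$, so $\Fil^0_{\dR}\calD_{\st}(D) = 0$ and $\dim_E\bigl(\calD_{\dR}(D)/\Fil^0_{\dR}\calD_{\dR}(D)\bigr) = [K:\Q_p]$. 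By the classification of rank-one $(\varphi,\Gamma)$-modules, $H^0(D) = 0$ (as $\delta|_{\calO_K^\times} = \prod_\sigma\sigma(z)^{m_\sigma}$ with all $m_\sigma > 0$), and --- the filtration condition for $D^\vee(\chi_{\cyc})$ being automatic, its $\Fil^0_{\dR}$ being everything --- one has $H^0(D^\vee(\chi_{\cyc}))\neq 0$ precisely when $\calD_{\cris}(D^\vee(\chi_{\cyc}))^{\varphi=1} = \bigl(\calD_{\st}(D)^\vee(\chi_{\cyc})\bigr)^{\varphi=1}\neq 0$, and then both spaces are one-dimensional. Set $\varepsilon := \dim_E\bigl(\calD_{\st}(D)^\vee(\chi_{\cyc})\bigr)^{\varphi=1}\in\{0,1\}$. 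Local Tate duality for $(\varphi,\Gamma)$-modules gives $\dim_E H^2(D) = \dim_E H^0(D^\vee(\chi_{\cyc})) = \varepsilon$, and the Euler characteristic formula $\dim_E H^0(D) - \dim_E H^1(D) + \dim_E H^2(D) = -[K:\Q_p]$ yields $\dim_E H^1(D) = [K:\Q_p] + \varepsilon$.

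I would then read $\dim_E H^1_f(D)$ and $\dim_E H^1_{\st}(D)$ off the complexes. Since $\Fil^0_{\dR}\calD_{\st}(D) = 0$, the first differential of $C^\bullet_{\cris}(D)$ is injective, so $H^0(C^\bullet_{\cris}(D)) = 0$; together with the Bloch--Kato identity $\dim_E H^1_f(D) = \dim_E H^0(D) + \dim_E\bigl(\calD_{\dR}(D)/\Fil^0_{\dR}\calD_{\dR}(D)\bigr)$ this gives $\dim_E H^1_f(D) = [K:\Q_p]$. Likewise $H^0(C^\bullet_{\st}(D)) = 0$; and since $N = 0$ the last differential of $C^\bullet_{\st}(D)$ is $(a,b,c)\mapsto -(p\varphi-1)c$, so $H^2(C^\bullet_{\st}(D)) = \calD_{\st}(D)/(p\varphi-1)\calD_{\st}(D)$, whose $E$-dimension equals $\dim_E\ker\bigl(p\varphi-1\mid\calD_{\st}(D)\bigr)$. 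This kernel is nonzero exactly when $\bigl(\calD_{\st}(D)^\vee(\chi_{\cyc})\bigr)^{\varphi=1}$ is nonzero --- the two conditions being equivalent under the $\varphi$-equivariant perfect duality pairing on $\calD_{\st}$ --- and then it has dimension $\varepsilon$. As the Euler characteristic of $C^\bullet_{\st}(D)$ equals $\dim_E\calD_{\st}(D) - \bigl(\dim_E\bigl(\calD_{\st}(D)/\Fil^0_{\dR}\calD_{\st}(D)\bigr) + 2\dim_E\calD_{\st}(D)\bigr) + \dim_E\calD_{\st}(D) = -[K:\Q_p]$, we get $\dim_E H^1_{\st}(D) = [K:\Q_p] + \varepsilon$.

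Finally, the comparison. If $\bigl(\calD_{\st}(D)^\vee(\chi_{\cyc})\bigr)^{\varphi=1}\neq 0$ then $\varepsilon = 1$, so $\dim_E H^1_{\st}(D) = [K:\Q_p] + 1 = \dim_E H^1(D)$, and the inclusion $H^1_{\st}(D)\hookrightarrow H^1(D)$ is an isomorphism. If instead $\bigl(\calD_{\st}(D)^\vee(\chi_{\cyc})\bigr)^{\varphi=1} = 0$ then $\varepsilon = 0$, so $\dim_E H^1_f(D) = [K:\Q_p] = \dim_E H^1_{\st}(D)$, and the inclusion $H^1_f(D)\hookrightarrow H^1_{\st}(D)$ is an isomorphism. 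I expect the main difficulty to be the middle bookkeeping: checking that $D$ and $D^\vee(\chi_{\cyc})$ have Hodge--Tate weights on the correct sides so that the claimed vanishing, resp. fullness, of $\Fil^0_{\dR}$ holds, and establishing the chain of identifications $\dim_E H^2(D) = \dim_E H^0(D^\vee(\chi_{\cyc})) = \dim_E\bigl(\calD_{\st}(D)^\vee(\chi_{\cyc})\bigr)^{\varphi=1} = \dim_E H^2(C^\bullet_{\st}(D))$, which rest on Tate duality for $(\varphi,\Gamma)$-module cohomology, the classification of rank-one modules together with Berger's description of $\calD_{\st}$, and the compatibility of Frobenius with the duality pairing on $\calD_{\st}$.
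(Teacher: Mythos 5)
Your proof is correct and follows essentially the same dimension-count strategy as the paper's: compare the inclusions $H^1_f(\calR_{K,E}(\delta))\subseteq H^1_{\st}(\calR_{K,E}(\delta))\subseteq H^1(\calR_{K,E}(\delta))$ by computing all three dimensions in terms of $\varepsilon=\dim_E\left(\calD_{\st}(\calR_{K,E}(\delta))^{\vee}(\chi_{\cyc})\right)^{\varphi=1}$. The only difference is one of packaging: where you rederive the inputs ($\dim_E H^1=[K:\Q_p]+\varepsilon$ via Tate duality and the Euler characteristic, $\dim_E H^1_f=[K:\Q_p]$ and $\dim_E H^1_{\st}=[K:\Q_p]+\varepsilon$ via the complexes $C^{\bullet}_{\cris}$ and $C^{\bullet}_{\st}$), the paper simply cites \cite[Corollary 1.4.5]{Benois} for the difference $\dim_E H^1_{\st}-\dim_E H^1_f$ and \cite[Proposition 2.1, Lemma 2.3]{Rosso} for the remaining dimension statements.
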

\begin{proof}
    By \cite[Corollary 1.4.5]{Benois}, we have the formula \[
        \dim_E H_{\st}^1(\calR_{K, E}(\delta)) - \dim_E H_f^1(\calR_{K, E}(\delta)) = \dim_{E} \left(\calD_{\st}(\calR_{K, E}(\delta))^{\vee}(\chi_{\cyc}) \right)^{\varphi =1}.
    \] Applying \cite[Proposition 2.1 \& Lemma 2.3]{Rosso}, we know that \[
        \dim_E H_{\st}^1(\calR_{K, E}(\delta)) \leq [K: \Q_p]+1 \quad \text{ and }\quad \dim_E H_f^1(\calR_{K, E}(\delta)) = [K: \Q_p].
    \] The lemma then follows easily. 
\end{proof}

Suppose $\delta: K^\times \rightarrow E^\times$ is a continuous character as in Lemma \ref{Lemma: H1st = H1 for a special character}. Suppose $\calR_{K, E}(\delta)$ is semistable and so $\rank_{K_0 \otimes_{\Q_p}E}\calD_{\st}(\calR_{K, E}(\delta)) = 1$. We fix a $K_0 \otimes_{\Q_p}E$-basis $v_{\delta}$ for $\calD_{\st}(\calR_{K, E}(\delta))$ and define \[
    \beta_{\delta}^* = -\cl(0,0,v_{\delta}), \quad \alpha_{\delta}^* = \cl(v_{\delta}, 0, 0) \in H^1(C_{\st}^{\bullet}(\calR_{K, E}(\delta))) = H_{\st}^1(\calR_{K, E}(\delta)). \footnote{ We use such notations due to \cite[Theorem 1.5.7]{Benois}.}
\] 

\begin{Lemma}\label{Lemma: unqiue number defined by a semistable extension}
    Suppose $\eta: K^\times \rightarrow E^\times$ is a continuous character of the form $\eta(z) = \prod_{\sigma: K \hookrightarrow \overline{\Q}_p} \sigma(z)^{n_{\sigma}}$ with all $n_{\sigma} \leq 0$. Suppose \[
        0 \rightarrow \calR_{K, E}(\delta) \rightarrow D \rightarrow \calR_{K, E}(\eta) \rightarrow 0    
    \] is a semistable extension (in the sense of \S \ref{subsection: (phi, Gamma)}). Then, \[
        \image\left( \partial: H^0(\calR_{K, E}(\eta)) \rightarrow H^1(\calR_{K, E}(\delta))\right) \subset H_{\st}^1(\calR_{K, E}(\delta)).
    \] Moreover, there exists a unique $\calL(D)\in E$ such that \[
        \beta_{\delta}^* + \calL(D) \alpha_{\delta}^* \in \image \partial. 
    \]
\end{Lemma}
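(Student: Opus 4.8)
The plan is to prove the two assertions separately: the inclusion $\image\partial\subseteq H^1_{\st}(\calR_{K,E}(\delta))$ will be a formal consequence of the semistability of $D$, while existence and uniqueness of $\calL(D)$ come down to computing $\partial$ on the one‑dimensional space $H^0(\calR_{K,E}(\eta))$ inside the complex $C_{\st}^{\bullet}(\calR_{K,E}(\delta))$. For the first assertion, recall from \eqref{eq: construction of an extension} that, under $H^1(\calR_{K,E}(\delta))\cong\mathrm{Ext}^1_{(\varphi,\Gamma)}(\calR_{K,E},\calR_{K,E}(\delta))$, the class $\partial(v)$ of $v\in H^0(\calR_{K,E}(\eta))=\Hom_{(\varphi,\Gamma)}(\calR_{K,E},\calR_{K,E}(\eta))$ is represented by the pullback $D_v:=D\times_{\calR_{K,E}(\eta),\,v}\calR_{K,E}$. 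For $v\neq 0$ the map $D_v\to D$ is injective with cokernel $\calR_{K,E}(\eta)/v(\calR_{K,E})$, a torsion $(\varphi,\Gamma)$-module and hence annihilated by a power of $t$ (equivalently: $v$ is a nonzero morphism of rank‑one de Rham objects whose weights differ by the non‑negative integers $-n_\sigma$). Inverting $t$ then identifies $D_v$ with $D$, so $\calD_{\st}(D_v)=\calD_{\st}(D)$. Since $D$ is semistable — by the meaning of "semistable extension", $\rank_{K_0\otimes E}\calD_{\st}(D)=\rank\calD_{\st}(\calR_{K,E}(\delta))+\rank\calD_{\st}(\calR_{K,E}(\eta))=1+1=2$, the character $\eta$ being algebraic so that $\calR_{K,E}(\eta)$ is crystalline — we get $\rank\calD_{\st}(D_v)=2=\rank\calD_{\st}(\calR_{K,E}(\delta))+1$, i.e.\ $\partial(v)\in H^1_{\st}(\calR_{K,E}(\delta))$; and $\partial(0)=0$, so the whole image lies in $H^1_{\st}(\calR_{K,E}(\delta))$.

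For the second assertion I first reduce to linear algebra. We are in the exceptional case ($\varphi$ acting by $p^{-1}$ on $\calD_{\st}(\calR_{K,E}(\delta))$, which is exactly what makes $\beta^*_\delta=-\cl(0,0,v_\delta)$ a class), and here \cite[\S 1.5]{Benois}, together with \cite[Proposition 2.1 \& Lemma 2.3]{Rosso}, gives $H^1_{\st}(\calR_{K,E}(\delta))=H^1_f(\calR_{K,E}(\delta))\oplus E\,\beta^*_\delta$, with $\alpha^*_\delta\in H^1_f(\calR_{K,E}(\delta))$ and $\alpha^*_\delta,\beta^*_\delta$ $E$‑linearly independent. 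Also $\dim_E H^0(\calR_{K,E}(\eta))=1$, since $\eta=\prod_\sigma\sigma^{-k_\sigma}$ with $k_\sigma\in\Z_{\geq 0}$; and for a generator $v_0$ the extension $D_{v_0}$ is non‑split, for otherwise $\calD_{\st}(D_{v_0})$ would carry trivial monodromy, contradicting $\calD_{\st}(D_{v_0})=\calD_{\st}(D)$ together with $N\neq 0$ on $\calD_{\st}(D)$. Hence $\partial\neq 0$ and $\image\partial=E\,\partial(v_0)$. It therefore suffices to prove that $\partial(v_0)\in\Span_E(\alpha^*_\delta,\beta^*_\delta)$ and $\partial(v_0)\notin H^1_f(\calR_{K,E}(\delta))$ — the latter being clear, since $\calD_{\st}(D_{v_0})=\calD_{\st}(D)$ is not crystalline.

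The main point is the membership $\partial(v_0)\in\Span_E(\alpha^*_\delta,\beta^*_\delta)$, which I would extract by computing the class of $D_{v_0}$ directly in $C_{\st}^{\bullet}(\calR_{K,E}(\delta))$. Choose $e\in\calD_{\st}(D_{v_0})$ lifting a basis vector of the quotient $\calD_{\st}(D_{v_0}/\calR_{K,E}(\delta))=\calD_{\st}(\calR_{K,E})$. Then $(\varphi-1)e$ and $Ne$ lie in $\calD_{\st}(\calR_{K,E}(\delta))=E\,v_\delta$ and contribute the second and third coordinates of the class as scalar multiples of $v_\delta$, while the first coordinate is the image of $e$ in $\calD_{\dR}(\calR_{K,E}(\delta))/\Fil^0$. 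Here the weight hypotheses enter: $\calR_{K,E}$ has all Hodge--Tate weights $0$, so its tangent space vanishes, and $\calR_{K,E}(\delta)$ has all Hodge--Tate weights $\geq 1$, so $\Fil^0\calD_{\dR}(\calR_{K,E}(\delta))=0$; combining this with the fact that the (unsaturated) inclusion $D_{v_0}\hookrightarrow D$ still induces an isomorphism on the relevant $\Fil^0$-piece of $\calD_{\dR}$, one sees that the first coordinate of $[D_{v_0}]$ must lie along the line $E\,v_\delta$. Hence $\partial(v_0)=\cl(\ast\,v_\delta,\ \ast\,v_\delta,\ \ast\,v_\delta)$, which visibly lies in $\Span_E(\alpha^*_\delta,\beta^*_\delta)$. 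I expect this last step — unwinding the isomorphism $H^1_{\st}(\calR_{K,E}(\delta))\cong H^1(C_{\st}^{\bullet}(\calR_{K,E}(\delta)))$ and pinning down the Hodge coordinate from the filtration on $\calD_{\dR}(D_{v_0})$ — to be the principal obstacle.

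Finally, granting $\image\partial=E\,\partial(v_0)$ with $\partial(v_0)=a\,\alpha^*_\delta+b\,\beta^*_\delta\in\Span_E(\alpha^*_\delta,\beta^*_\delta)\setminus H^1_f(\calR_{K,E}(\delta))$, we have $b\neq 0$; setting $\calL(D):=a/b$ gives $\beta^*_\delta+\calL(D)\,\alpha^*_\delta=b^{-1}\partial(v_0)\in\image\partial$, which is existence. If also $\beta^*_\delta+t\,\alpha^*_\delta\in\image\partial=E\,\partial(v_0)$, say $\beta^*_\delta+t\,\alpha^*_\delta=\mu(a\,\alpha^*_\delta+b\,\beta^*_\delta)$, then comparing $\beta^*_\delta$-coefficients (valid since $\alpha^*_\delta,\beta^*_\delta$ are independent) forces $\mu b=1$, whence $t=\mu a=a/b=\calL(D)$, which is uniqueness.
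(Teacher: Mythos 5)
Your first assertion is handled correctly, and by a route genuinely different from the paper's: you realise $\partial(v)$ as the pullback extension $D_v\subset D$, observe that the cokernel is killed by a power of $t$ so that $\calD_{\st}(D_v)=\calD_{\st}(D)$ has rank $2$, and conclude; the paper instead uses that $H^0(\calR_{K,E}(\eta))=H^0_{\st}(\calR_{K,E}(\eta))$ together with functoriality of the complexes $C_{\st}^{\bullet}$. Both work. One caveat, which you share with the paper's own proof but state more carelessly: at two points you invoke that $N\neq 0$ on $\calD_{\st}(D)$ (to get $D_{v_0}$ non-split, and to get $\partial(v_0)\notin H^1_f$), calling this ``clear''. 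It is not a consequence of ``semistable extension'' as defined in \S\ref{subsection: (phi, Gamma)}: the split extension $\calR_{K,E}(\delta)\oplus\calR_{K,E}(\eta)$ is semistable (indeed crystalline) and falsifies the conclusion of the lemma read literally, so non-crystallinity of $D$ is an implicit extra hypothesis (true in the intended application, where $N$ is an isomorphism between the relevant eigenspaces); it must be recorded, not declared clear.

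The genuine gap is the step you yourself flag as ``the principal obstacle'': the membership $\partial(v_0)\in\Span_E(\alpha^*_{\delta},\beta^*_{\delta})$ with nonvanishing $\beta^*_{\delta}$-coefficient. This is the entire content of the lemma, and your sketch does not establish it. First, you never normalise the $\varphi$-coordinate: from an arbitrary lift $e$ you only obtain a class $\cl\bigl(a,(\varphi-1)e,Ne\bigr)$, and a class with nonzero middle entry is not ``visibly'' a combination of $\alpha^*_{\delta}=\cl(v_{\delta},0,0)$ and $\beta^*_{\delta}=-\cl(0,0,v_{\delta})$; one must either choose the lift to be $\varphi$-invariant (possible since $\varphi$ acts on $\calD_{\st}(\calR_{K,E}(\delta))$ by $p^{-1}\neq 1$), which is exactly what the paper does with $\widetilde{v}_{\eta}$, or explicitly absorb $(\varphi-1)e$ into a coboundary using invertibility of $\varphi-1$ on $\calD_{\st}(\calR_{K,E}(\delta))$. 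Second, and more seriously, the assertion that the Hodge coordinate ``must lie along the line $E\,v_{\delta}$'' is precisely what has to be proved when $K\neq\Q_p$: a priori that coordinate is only a $K\otimes_{\Q_p}E$-multiple of $v_{\delta}$ in the tangent space, while $\Span_E(\alpha^*_{\delta},\beta^*_{\delta})$ is a proper subspace of $H^1_{\st}(\calR_{K,E}(\delta))$ (which has dimension $[K:\Q_p]+1$), so this is a real constraint and not bookkeeping; your one-line appeal to $D_{v_0}\hookrightarrow D$ inducing an isomorphism on ``the relevant $\Fil^0$-piece'' does not pin the coordinate down. The paper settles both points simultaneously by the diagram chase through the short exact sequence of complexes $C_{\st}^{\bullet}(\calR_{K,E}(\delta))\to C_{\st}^{\bullet}(D)\to C_{\st}^{\bullet}(\calR_{K,E}(\eta))$: with the $\varphi$-fixed lift $\widetilde{v}_{\eta}$ of $v_{\eta}$, commutativity and exactness of the middle row give $(\widetilde{v}_{\eta},0,N\widetilde{v}_{\eta})=a(v_{\delta},0,0)-b(0,0,v_{\delta})$, hence $\partial(\cl(v_{\eta}))=a\alpha^*_{\delta}+b\beta^*_{\delta}$ with $b$ invertible because $N\widetilde{v}_{\eta}$ spans the $\varphi=p^{-1}$ eigenspace, and $\calL(D)=a/b$. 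That explicit identification, rather than the formal structure $H^1_{\st}=H^1_f\oplus E\beta^*_{\delta}$ plus non-crystallinity, is what your proposal leaves out, and without it neither the existence of $\calL(D)$ nor the fact that it lies in $E$ is proved.
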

\begin{proof}
    First of all, it follows from \cite[Proposition 1.2.7]{Benois} that $\calR_{K, E}(\eta)$ is also semistable. Hence, by applying [\emph{op. cit.}, Proposition 1.4.4], we know that \[
        H^0(\calR_{K, E}(\eta)) = H_{\st}^0(\calR_{K, E}). 
    \] Taking the cohomology of the short exact sequence in the lemma, we have a commutative diagram \[
        \begin{tikzcd}
            H^0(\calR_{K, E}(\eta)) \arrow[d, equal] \arrow[r, "\partial"] & H^1(\calR_{K, E}(\delta))\\
            H_{\st}^0(\calR_{K, E}(\eta)) \arrow[r, "\partial"] & H_{\st}^1(\calR_{K, E}(\delta)) \arrow[u, hook]
        \end{tikzcd},
    \] which shows the first claim.
    
    Since $\calR_{K, E}(\eta)$ is semistable and it is of rank $1$ over $\calR_{K,E}$, it is crystalline and $\calD_{\st}(\calR_{K, E}(\eta)) = \calD_{\cris}(\calR_{K, E}(\eta))$. This is because the monodromy operator is nilpotent. We consider the commutative diagram \[
        \begin{tikzcd}[column sep = tiny]
            0 \arrow[r] & \scalemath{0.7}{ \calD_{\st}(\calR_{K, E}(\delta)) } \arrow[r]\arrow[d] & \scalemath{0.7}{ \calD_{\st}(D) } \arrow[r]\arrow[d] & \scalemath{0.8}{ \calD_{\st}(\calR_{K, E}(\eta)) }\arrow[r]\arrow[d] & 0\\
            0 \arrow[r] & \scalemath{0.7}{ \frac{\calD_{\st}(\calR_{K, E}(\delta))}{\Fil_{\dR}^0 \calD_{\st}(\calR_{K, E}(\delta))} \oplus \calD_{\st}(\calR_{K, E}(\delta))^2 }  \arrow[r] \arrow[d] & \scalemath{0.7}{ \frac{\calD_{\st}(D)}{\Fil_{\dR}^0\calD_{\st}(D)} \oplus \calD_{\st}(D)^2 } \arrow[r]\arrow[d] & \scalemath{0.7}{ \frac{\calD_{\st}(\calR_{K, E}(\eta))}{\Fil_{\dR}^0\calD_{\st}(\calR_{K, E}(\eta))} \oplus \calD_{\st}(\calR_{K, E}(\eta))^2 } \arrow[r]\arrow[d] & 0\\
            0 \arrow[r] & \scalemath{0.7}{ \calD_{\st}(\calR_{K, E}(\delta)) } \arrow[r] & \scalemath{0.7}{ \calD_{\st}(D) } \arrow[r] & \scalemath{0.7}{ \calD_{\st}(\calR_{K, E}(\eta)) } \arrow[r] & 0
        \end{tikzcd}
    \] induced by the short exact sequence in the lemma, where the rows are exact and the columns are the semistable complexes. Let $v_{\eta}$ be the element in $\calD_{\st}(\calR_{K, E}(\eta))$ that gives rise to the basis of $H^0(\calR_{K, E}(\eta))$ as in \cite[Proposition 2.1]{Rosso}. In particular, $v_{\eta} \in \Fil_{\dR}^0 \calD_{\st}(\calR_{K, E}(\eta))$ and $\varphi(v_{\eta}) = v_{\eta}$. Using the relation $N \varphi = p \varphi N$, one deduces that $1$ and $p^{-1}$ are Frobenius eigenvalues of $\calD_{\st}(D)$. We choose a lift $\widetilde{v}_{\eta}\in \calD_{\st}(D)$ such that $\varphi(\widetilde{v}_{\eta}) = \widetilde{v}_{\eta}$. This then implies that $N(\widetilde{v}_{\eta})$ has Frobenius eigenvalue $p^{-1}$. The commutativity of the diagram then yields \[
        \begin{tikzcd}
            \widetilde{v}_{\eta} \arrow[r, mapsto]\arrow[d, mapsto] & v_{\eta}\arrow[d, mapsto]\\
            (\widetilde{v}_{\eta}, 0, N(\widetilde{v}_{\eta})) \arrow[r, mapsto] & 0 
        \end{tikzcd}.
    \] Applying the exactness of the middle row, we see that \[
        (\widetilde{v}_{\eta}, 0, N(\widetilde{v}_{\eta})) = a(v_{\delta}, 0, 0) - b(0, 0, v_{\delta}).
    \]
    Since $N(\widetilde{v}_{\eta})$ is a basis for the Frobenius eigensubspace of $\calD_{\st}(D)$ on which $\varphi$ acts via $p^{-1}$, we see that $b$ is invertible. 
    We then conclude that \[
        \partial: H^0(\calR_{K, E}(\eta)) \rightarrow H_{\st}^1(\calR_{K, E}(\delta)), \quad \cl(v_{\eta}) \mapsto a \alpha_{\delta}^* + b\beta_{\delta}^*.
    \]
    Therefore, $\calL(D) := a/b$.
\end{proof}
\section{Greenberg--Benois \texorpdfstring{$\calL$}{L}-invariants}\label{section: GB}

In this section, we first discuss the construction of Greenberg--Benois $\calL$-invariant over $\Q$ in \S \ref{subsection: GB over Q} by summarising Benois's construction in \cite[\S 2]{Benois} (in particular [\emph{op. cit.}, (26)]). Then we follow the strategy in \cite{Rosso}, generalising Benois's construction to general number fields by considering inductions of Galois representations (\S \ref{subsection: GB over unmber fields}).

\subsection{Greenberg--Benois \texorpdfstring{$\calL$}{L}-invariants over \texorpdfstring{$\Q$}{Q}}\label{subsection: GB over Q}
To define Greenberg--Benois $\calL$-invariants over $\Q$, we start with a Galois representation \[
    \rho: \Gal_{\Q} \rightarrow \GL_n(E),
\] which is unramified outside a finite set of places. We denote by \[
    S = \{\ell: \rho|_{\Gal_{\Q_{\ell}}} \text{ is ramified}\} \cup \{p, \infty\}
\] and let $\Q_{S}$ be the maximal extension of $\Q$ that is unramified outside $S$.

Recall the Bloch--Kato Selmer group associated with $\rho$: Given $v\in S$, define the local Selmer groups \[
    H_f^1(\Q_{v}, \rho) = \left\{\begin{array}{ll}
        \ker\left(H^1(\Q_{\ell}, \rho) \rightarrow H^1(I_{\ell}, \rho)\right), & \text{ if }v=\ell \nmid p\infty,  \\
        \ker(H^1(\Q_p, \rho) \rightarrow H^1(\Q_p, \rho\otimes_{\Q_p}\B_{\cris})), & \text{ if }v=p, \\
        H^1(\R, \rho), & \text{ if } v=\infty,
    \end{array}\right.
\] where $I_{\ell}$ stands for the inertia group at $\ell$. Then, the Bloch-Kato Selmer group associated with $\rho$ is defined to be \[
    H_f^1(\Q, \rho) := \ker\left( H^1(\Gal_{\Q_S}, \rho) \rightarrow \bigoplus_{v\in S} \frac{H^1(\Q_v, \rho)}{H_f^1(\Q_v, \rho)}\right).
\]

Let $\rho_p := \rho|_{\Gal_{\Q_p}}$. We follow \cite[\S 2.1.2, 2.1.4]{Benois} and proceed with the following conditions:\begin{enumerate}
    \item[(B1)] The local representation $\rho_p$ is semistable with Hodge--Tate weights $k_1 \leq k_2 \leq \cdots \leq k_n$, giving rise to the \emph{de Rham filtration} $\Fil_{\dR}^{\bullet} \D_{\st}(\rho)$.
    \item[(B2)] The Frobenius action on $\D_{\st}(\rho_p)$ is semisimple at $1$ and $p^{-1}$.
    \item[(GB1)] $H_f^1(\Q, \rho) = 0 = H_f^1(\Q, \rho^{\vee}(1))$.\footnote{Here, by confusing $\rho$ with its underlying vector space, $\rho^{\vee} = \Hom(\rho, E)$ is the dual representation of $\rho$ and $\rho^{\vee}(1)$ is the twist of $\rho^{\vee}$ by the cyclotomic character.}  
    \item[(GB2)] $H^0(\Gal_{\Q_S}, \rho) = 0 = H^0(\Gal_{\Q_S}, \rho^{\vee}(1))$.
    
    \item[(GB3)] The associated $(\varphi, \Gamma)$-module $\D_{\rig}^{\dagger}(\rho_p)$ has no saturated subquotient\footnote{ Here, by `saturated', we mean the following: A \emph{saturated} $(\varphi, \Gamma)$-submodule of a $(\varphi, \Gamma)$-module is a $(\varphi, \Gamma)$-submodule that has a torsion-free quotient. A \emph{saturated} subquotient is a subquotient arising from saturated $(\varphi, \Gamma)$-submodules; in particular, a saturated subquotient is torsion-free. } isomorphic to $U_{k,m}$ with $k\geq 1$ and $m\geq 0$ (\cite[\S 2.1.2]{Benois}), where $U_{k,m}$ is the unique crystalline $(\varphi, \Gamma)$-module sitting in a non-split short exact sequence \[
        0 \rightarrow \calR_{\Q_p, E}(|z|z^k) \rightarrow U_{k,m} \rightarrow \calR_{\Q_p, E}(z^{-m}) \rightarrow 0.
    \]
\end{enumerate}

Given a \emph{regular submodule} $D\subset \D_{\st}(\rho_p)$, \emph{i.e.}, a $(\varphi, N)$-submodule such that $\D_{\st}(\rho_p) = D \oplus \Fil_{\dR}^0\D_{\st}(\rho_p)$, Benois defines a five-step filtration \begin{equation}\label{eq: Benois's 5-step filtration; (phi,N)-module side}
    D^{\GB}_i := \left\{ \begin{array}{cl}
        0, & i=-2, \\
        (1-p^{-1}\varphi^{-1})D + N(D^{\varphi = 1}), & i=-1,\\
        D, & i=0,\\
        D + \D_{\st}(\rho_p)^{\varphi=1}\cap N^{-1}(D^{\varphi=p^{-1}}), & i=1,\\
        \D_{\st}(\rho_p), & i=2.
    \end{array}\right. 
\end{equation}
Such a filtration then yields a filtration on $\D_{\rig}^{\dagger}(\rho_p)$ by \[
    \Fil_i^{\GB} \D_{\rig}^{\dagger}(\rho_p) = \D_{\rig}^{\dagger}(\rho_p) \cap \left( D_i^{\GB} \otimes_{\Q_p} \calR_{\Q_p, E}[1/t]\right),
\] where $t = \log(1+T) \in \calR_{\Q_p, E}$.

Using this filtration, we define the \emph{exceptional subquotient}\[
    W := \Fil_1^{\GB} \D_{\rig}^{\dagger}(\rho_p) / \Fil_{-1}^{\GB} \D_{\rig}^{\dagger}(\rho_p).
\] By \cite[Proposition 2.1.7]{Benois}, we have \[
    \begin{array}{rcl}
        W \cong W_0 \oplus W_1 \oplus M & & \rank W_0 = \dim_E H^0(W^{\vee}(1)),  \\
        \Gr_0^{\GB}\D_{\rig}^{\dagger}(\rho_p) \cong W_0 \oplus M_0 & \text{ with } &  \rank W_1 = \dim_E H^0(W),\\
        \Gr_1^{\GB}\D_{\rig}^{\dagger}(\rho_p) \cong W_1 \oplus M_1 & & \rank M_0 = \rank M_1,
    \end{array}
\]
where $M$, $M_0$, and $M_1$ sit inside a short exact sequence \[
    0 \rightarrow M_0 \rightarrow M \rightarrow M_1 \rightarrow 0.
\]
Moreover, one has  \begin{align*}
    H^1(W) & = \coker \left(H^1(\Fil_{-1}^{\GB}\D_{\rig}^{\dagger}(\rho_p)) \rightarrow H^1(\Fil_1^{\GB}\D_{\rig}^{\dagger}(\rho_p)) \right),\\
    H_f^1(W) & = \coker \left(H_f^1(\Fil_{-1}^{\GB}\D_{\rig}^{\dagger}(\rho_p)) \rightarrow H_f^1(\Fil_1^{\GB}\D_{\rig}^{\dagger}(\rho_p)) \right),
\end{align*} and $\dim_E H^1(W)/H_f^1(W) = e_D = \rank M_0 + \rank W_0 +\rank W_1$ (\cite[\S 2.2.1]{Benois}).

Under the assumption (GB1) and (GB2), one applies Poitou--Tate exact sequence and deduces an isomorphism \[
    H^1(\Gal_{\Q_S}, \rho) \cong \bigoplus_{v\in S} \frac{H^1(\Q_v, \rho)}{H_f^1(\Q_v, \rho)}.
\]
Note that the latter space contains an $e_D$-dimensional subspace $\frac{H^1(W)}{H_f^1(W)} \cong \frac{H^1(\Fil_1^{\GB}\D_{\rig}^{\dagger}(\rho_p))}{H^1_f(\Q_p, \rho)}$. We then define $H^1(D, \rho)$ to be the image of $\frac{H^1(W)}{H_f^1(W)}$ in $H^1(\Gal_{\Q_S}, \rho)$.

To define the $\calL$-invariant, we further assume that \begin{enumerate}
    \item[(GB4)] $W_0 = 0$ and the Hodge--Tate weights for $\Gr_1^{\GB} \D_{\rig}^{\dagger}(\rho_p)$ are positive (see \cite[Proposition 1.5.9]{Benois}). 
\end{enumerate}
Benois shows that there is a decomposition (\cite[\S 2.1.9]{Benois}, see also the discussion in \cite[\S 1.2]{HJ-LInvariant}) \[
    H^1(\Gr_1^{\GB}\D_{\rig}^{\dagger}(\rho_p)) \cong H_f^1(\Gr_1^{\GB}\D_{\rig}^{\dagger}(\rho_p)) \oplus H_c^1(\Gr_1^{\GB}\D_{\rig}^{\dagger}(\rho_p))
\] and isomorphisms \[
    H_f^1(\Gr_1^{\GB}\D_{\rig}^{\dagger}(\rho_p)) \cong \calD_{\cris}(\Gr_1^{\GB}\D_{\rig}^{\dagger}(\rho_p)) \cong H_c^1(\Gr_1^{\GB}\D_{\rig}^{\dagger}(\rho_p)).
\]
There are natural morphisms $\varrho_{D, ?}: H^1(D, \rho) \rightarrow \calD_{\cris}(\Gr_1^{\GB}\D_{\rig}^{\dagger}(\rho_p))$ (for $? \in \{f, c\}$) making the diagram \[
    \begin{tikzcd}
        \calD_{\cris}(\Gr_1^{\GB}\D_{\rig}^{\dagger}(\rho_p)) \arrow[r, "\cong"] & H_f^1(\Gr_1^{\GB}\D_{\rig}^{\dagger}(\rho_p))\\
        H^1(D, \rho)\arrow[u, "\varrho_{D, f}"]\arrow[d, "\varrho_{D, c}"'] \arrow[r] & H^1(\Gr_1^{\GB} \D_{\rig}^{\dagger}(\rho_p))\arrow[u]\arrow[d]\\
        \calD_{\cris}(\Gr_1^{\GB}\D_{\rig}^{\dagger}(\rho_p)) \arrow[r, "\cong"] & H_c^1(\Gr_1^{\GB}\D_{\rig}^{\dagger}(\rho_p))
    \end{tikzcd}
\]
commutative. Under the assumption of (GB4), Benois shows that $\varrho_{D, c}$ is an isomorphism and so one can define the \textbf{\textit{Greenberg--Benois $\calL$-invariant}} attached to $\rho$ (with respect to $D$) as \[
    \calL_{\GB}(\rho) = \calL_{\GB}(\rho, D) := \det\left( \varrho_{D, f} \circ \varrho_{D, c}^{-1}\right)\in E
\]

\subsection{Greenberg--Benois \texorpdfstring{$\calL$}{L}-invariants over general number fields}\label{subsection: GB over unmber fields}
To define the Greenberg--Benois $\calL$-invariants over general number fields, we follows the idea in \cite{Rosso} (see also \cite{Hida-HilberLinvariant}) and consider the induction of a Galois representation. More precisely, let $F$ be a number field and suppose we are given a Galois representation \[
    \rho: \Gal_F \rightarrow \GL_n(E),
\]
where $E$ is (again) a finite extension of $\Q_p$. We shall consider the induction $\Ind^{\Q}_F \rho$ and define $S$ similarly as before.

Assume the following conditions hold for $\rho$: \begin{enumerate}
    \item[(B1)] For each place $\frakp |p$ in $F$, $\rho_{\frakp} = \rho|_{\Gal_{F_{\frakp}}}$ is semistable with Hodge--Tate weights $k_{\frakp, \sigma, 1} \leq k_{\frakp, \sigma, 2}\leq \cdots \leq k_{\frakp, \sigma, n}$ where $\sigma: F_{\frakp} \hookrightarrow \overline{\Q}_p$. 
    \item[(B2)] For each place $\frakp |p$ in $F$, the Frobenius action on $\D_{\st}(\rho_{\frakp})$ is semistable at $1$ and $p^{-1}$. 
    \item[(GB1)] $H_f^1(\Q, \Ind^{\Q}_{F} \rho) = 0 = H_f^1(\Q, \Ind_F^{\Q} \rho^{\vee}(1))$.
    \item[(GB2)] $H^0(\Gal_{\Q_S}, \rho) = 0 = H^0(\Gal_{\Q_S}, \rho^{\vee}(1))$.
    \item[(GB3)] The associated $(\varphi, \Gamma)$-module $\D_{\rig}^{\dagger}((\Ind_F^{\Q}\rho)_p) = \bigoplus_{\frakp|p} \D_{\rig}^{\dagger}(\rho_{\frakp})$ has no saturated subquotient isomorphic to $U_{k,m}$ with $k\geq 1$ and $m\geq 0$ (\cite[\S 2.1.2]{Benois}).
\end{enumerate}

For every $\frakp|p$, choose a regular subomdule $D_{\frakp} \subset \D_{\st}(\rho_{\frakp})$. Then, $D := \bigoplus_{\frakp |p} D_{\frakp } \subset \bigoplus_{\frakp | p}\D_{\st}(\rho_{\frakp}) = \D_{\rig}^{\dagger}((\Ind^{\Q}_{F}\rho)_p)$ is a regular submodule.\footnote{ Note that $\D_{\st}(\Ind^{\Q_p}_{F_{\frakp}}\rho_{\frakp})$ is nothing but $\D_{\st}(\rho_{\frakp})$ (a priori a $K_0 \otimes_{\Q_p}E$-module) viewing as a $E$-vector space. } Moreover, if $W_0$, $M_0$, $M_1$ (resp., $W_{\frakp, 0}$, $M_{\frakp, 0}$, $M_{\frakp, 1}$) are the corresponding subquotients of $\D_{\rig}^{\dagger}((\Ind^{\Q}_{F}\rho)_p)$ (resp., $\D_{\rig}^{\dagger}(\rho_{\frakp})$) with respect to $D$ (resp., $D_{\frakp}$), then we have decompositions \[
    W_0 = \bigoplus_{\frakp | p}W_{\frakp, 0}, \quad M_0 = \bigoplus_{\frakp| p} M_{\frakp, 0}, \quad M_1 = \bigoplus_{\frakp|p} M_{\frakp, 1}.
\]
Hence, by assuming \begin{enumerate}
    \item[(GB4)] $W_{\frakp, 0} =0$ for every $\frakp |p$ and the Hodge--Tate weights for $\Gr_1^{\GB} \D_{\rig}^{\dagger}((\Ind_F^{\Q} \rho)_p)$ are all positive, 
\end{enumerate}
we may then follow the same recipe and define the \textbf{\textit{Greenberg--Benois $\calL$-invariant}} attached to $\rho$ (with respect to $\{D_{\frakp}\}_{\frakp|p}$) \[
    \calL_{\GB}(\rho) = \calL_{\GB}(\rho, \{D_{\frakp}\}_{\frakp|p}) := \calL_{\GB}(\Ind_{F}^{\Q}\rho, D)\in E.
\]
\section{Fontaine--Mazur \texorpdfstring{$\calL$}{L}-invariants}\label{section: FM}

To define the Fontaine--Mazur $\calL$-invariants, we fix a finite extension $K$ over $\Q_p$. We shall be considering Galois representations \[
    \rho: \Gal_K \rightarrow \GL_n(E),
\] 
where $E$ is (again) a finite extension of $\Q_p$. In what follows, we consider the $(\varphi, N)$-module $\D_{\st}(\rho)$ associated with $\rho$. Note that, if $K_0$ is the maximal unramified extension of $\Q_p$ in $K$, then $\D_{\st}(\rho)$ is a priori a $K_0$-vector space. However, we shall linearise everything by base change to $E$. 

Let $q$ be the order of the residue field of $K$. We further assume $\rho$ enjoys the following properties: 
\begin{enumerate}
    \item[(B1)] The representation $\rho$ is semistable with Hodge--Tate weights $k_{\sigma, 1} \leq k_{\sigma, 2} \leq \cdots \leq k_{\sigma, n-1}\leq k_{\sigma, n}$, where $\sigma : K \hookrightarrow \overline{\Q}_p$. The Hodge--Tate weights give rise to the de Rham filtration $\Fil_{\dR}^{\bullet} \D_{\st}(\rho) = [\Fil^{k_{\bullet, 1}}_{\dR} \D_{\st}(\rho) \supset \Fil^{k_{\bullet, 2}}_{\dR} \D_{\st}(\rho) \supset \cdots \supset \Fil_{\dR}^{k_{\bullet, n}}\D_{\st}(\rho)]$.
    \item[(B2)] The linearised Frobenius eigenvalues on $\D_{\st}(\rho)$ are $q^m, ..., q^{m-n+1}$.
    \item[(FM1)] Let $D_{(\varphi, N)}^{(i)}$ be the eigenspace in $\D_{\st}(\rho)$ on which the Frobenius acts via $q^{m-i}$ and we assume that the induced monodromy operator $N$ on $D_{(\varphi, N)}^{(i)}$ gives an isomorphism \[
        N : D_{(\varphi, N)}^{(i)} \rightarrow D_{(\varphi, N)}^{(i+1)}.
    \]
    \item[(FM2)] Define \emph{Frobenius filtration} $\Fil_{\bullet}^{\varphi}\D_{\st}(\rho)$ by $\Fil_j^{\varphi}\D_{\st}(\rho) := \sum_{i>n-1-j} D_{(\varphi, N)}^{(i)}$ and assume the orthogonality \[
        \D_{\st}(\rho) = \Fil_{\dR}^{k_{\bullet, i}} \D_{\st}(\rho) \oplus \Fil_i^{\varphi}\D_{\st}(\rho).
    \]
\end{enumerate}

\begin{Lemma}\label{Lemma: dR graded piece mapping into phi graded piece}
    Keep the notations and the assumptions as above. We abuse the notation and denote by $\Gr_{\dR}^{n-1}\D_{\st}(\rho) := \Fil^{k_{\bullet, n-1}}_{\dR}\D_{\st}(\rho)/\Fil_{\dR}^{k_{\bullet, n}} \D_{\st}(\rho)$. Then, we have an inclusion \[
        \Gr_{\dR}^{n-1} \D_{\st}(\rho) \hookrightarrow D_{(\varphi, N)}^{(0)} \oplus D_{(\varphi, N)}^{(1)}
    \]
\end{Lemma}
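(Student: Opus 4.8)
The plan is to read the statement off directly from the transversality hypothesis (FM2); the monodromy operator $N$ enters only indirectly, through (FM1), which forces each Frobenius eigenspace $D_{(\varphi, N)}^{(i)}$ to be a line and hence makes $\dim_E \Fil_i^{\varphi}\D_{\st}(\rho) = i$. (Note that the relation $N\varphi = p\varphi N$ makes (FM1) automatically compatible with (B2), $N$ scaling the Frobenius eigenvalue by $q^{-1}$.) It is worth stressing at the outset that the asserted map is \emph{not} a literal containment of subspaces of $\D_{\st}(\rho)$: it is the inclusion of $\Gr_{\dR}^{n-1}\D_{\st}(\rho)$ followed by the projection onto the top two Frobenius eigenspaces.

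First I would clarify the arrow and do the dimension bookkeeping. From (FM2) at an index $i$ one has $\D_{\st}(\rho) = \Fil_{\dR}^{k_{\bullet, i}}\D_{\st}(\rho) \oplus \Fil_i^{\varphi}\D_{\st}(\rho)$, so $\dim_E \Fil_{\dR}^{k_{\bullet, i}}\D_{\st}(\rho) = n - i$; in particular $\Fil_{\dR}^{k_{\bullet, n}}\D_{\st}(\rho) = 0$ (alternatively this is just the strictness of the inclusions displayed in (B1) together with this dimension count), so $\Gr_{\dR}^{n-1}\D_{\st}(\rho) = \Fil_{\dR}^{k_{\bullet, n-1}}\D_{\st}(\rho)$, and this space is a line, being a complement to the hyperplane $\Fil_{n-1}^{\varphi}\D_{\st}(\rho)$. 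On the target side, the Frobenius eigenspace decomposition $\D_{\st}(\rho) = \bigoplus_{i=0}^{n-1} D_{(\varphi, N)}^{(i)}$ exhibits $D_{(\varphi, N)}^{(0)} \oplus D_{(\varphi, N)}^{(1)}$ as a canonical complement of $\Fil_{n-2}^{\varphi}\D_{\st}(\rho) = \bigoplus_{i=2}^{n-1} D_{(\varphi, N)}^{(i)}$, and the map of the lemma is the composite
\[
    \Gr_{\dR}^{n-1}\D_{\st}(\rho) = \Fil_{\dR}^{k_{\bullet, n-1}}\D_{\st}(\rho) \hookrightarrow \D_{\st}(\rho) \twoheadrightarrow \D_{\st}(\rho)\big/\Fil_{n-2}^{\varphi}\D_{\st}(\rho) \cong D_{(\varphi, N)}^{(0)} \oplus D_{(\varphi, N)}^{(1)}.
\]

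The one thing that needs checking is that this composite is injective, and it is: since the de Rham filtration is decreasing and $k_{\bullet, n-2} \leq k_{\bullet, n-1}$, we have $\Fil_{\dR}^{k_{\bullet, n-1}}\D_{\st}(\rho) \subseteq \Fil_{\dR}^{k_{\bullet, n-2}}\D_{\st}(\rho)$; and by (FM2) at $i = n-2$ the projection along $\Fil_{n-2}^{\varphi}\D_{\st}(\rho)$ restricts to an isomorphism on $\Fil_{\dR}^{k_{\bullet, n-2}}\D_{\st}(\rho)$, hence to an injection on the smaller subspace $\Fil_{\dR}^{k_{\bullet, n-1}}\D_{\st}(\rho)$. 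That completes it. I do not expect a genuine obstacle: once the definitions are unwound, the lemma is precisely the content of (FM2) at the two consecutive indices $n-2$ and $n-1$. The only points that call for a little care are the degenerate cases $n \leq 2$, where $D_{(\varphi, N)}^{(0)} \oplus D_{(\varphi, N)}^{(1)} = \D_{\st}(\rho)$ and there is nothing to prove, and --- if one is strict about conventions --- the vanishing $\Fil_{\dR}^{k_{\bullet, n}}\D_{\st}(\rho) = 0$, which is what allows one to regard $\Gr_{\dR}^{n-1}\D_{\st}(\rho)$ as a line rather than a genuine two-step subquotient.
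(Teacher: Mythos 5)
Your proposal is correct and is essentially the paper's own argument: the paper's chain of identifications uses the orthogonality (FM2) at indices $n-2$ and $n$ to identify $D_{(\varphi, N)}^{(0)} \oplus D_{(\varphi, N)}^{(1)} = \Fil_n^{\varphi}\D_{\st}(\rho)/\Fil_{n-2}^{\varphi}\D_{\st}(\rho)$ with $\Fil_{\dR}^{k_{\bullet, n-2}}\D_{\st}(\rho)/\Fil_{\dR}^{k_{\bullet, n}}\D_{\st}(\rho)$, which is precisely your statement that projection along $\Fil_{n-2}^{\varphi}\D_{\st}(\rho)$ is an isomorphism on $\Fil_{\dR}^{k_{\bullet, n-2}}\D_{\st}(\rho)$ and hence injective on $\Gr_{\dR}^{n-1}\D_{\st}(\rho)$. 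Your explicit description of the map and the dimension bookkeeping (including $\Fil_{\dR}^{k_{\bullet, n}}\D_{\st}(\rho)=0$) just make explicit what the paper leaves implicit.
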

\begin{proof}
    Indeed, we have a sequence of identifications \begin{align*}
        D_{(\varphi, N)}^{(0)} \oplus D_{(\varphi, N)}^{(1)} & = \frac{\Fil_n^{\varphi} \D_{\st}(\rho)}{\Fil_{n-2}^{\varphi}\D_{\st}(\rho)} \\
        & = \frac{\Fil_n^{\varphi} \D_{\st}(\rho) \oplus \Fil_{\dR}^{k_{\bullet, n-2}}\D_{\st}(\rho)}{\Fil_{n-2}^{\varphi}\D_{\st}(\rho) \oplus \Fil_{\dR}^{k_{\bullet, n-2}}\D_{\st}(\rho)}\\
        & = \frac{\Fil_n^{\varphi} \D_{\st}(\rho) \oplus \Fil_{\dR}^{k_{\bullet, n-2}}\D_{\st}(\rho)}{\D_{\st}(\rho)}\\
        & = \frac{\Fil_n^{\varphi} \D_{\st}(\rho) \oplus \Fil_{\dR}^{k_{\bullet, n-2}}\D_{\st}(\rho)}{\Fil_n^{\varphi} \D_{\st}(\rho) \oplus \Fil_{\dR}^{k_{\bullet, n}}\D_{\st}(\rho)}\\
        & = \frac{\Fil_{\dR}^{k_{\bullet, n-2}}\D_{\st}(\rho)}{ \Fil_{\dR}^{k_{\bullet, n}} \D_{\st}(\rho)},
    \end{align*} where the third and the forth identifications follows from the orthogonality assumption.
\end{proof}

\begin{Lemma}\label{Lemma: rank of D_(phi, N)^{(i)}}
    For every $i$, we have \[
        \rank_{K_0 \otimes_{\Q_p}E} D_{(\varphi, N)}^{(i)} = 1.
    \] Moreover, $m <k_{\sigma, n}$ for every $\sigma: K \hookrightarrow \overline{\Q}_p$.
\end{Lemma}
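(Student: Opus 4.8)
The two assertions are of rather different natures. For the rank statement the argument is purely linear-algebraic: $\D_{\st}(\rho)$ is free of rank $n$ over the \'etale $\Q_p$-algebra $K_0 \otimes_{\Q_p} E$ (standard for a semistable representation with coefficients), and by (B2) the linearised Frobenius $\varphi^{[K_0:\Q_p]}$ has the $n$ pairwise distinct eigenvalues $q^m, \dots, q^{m-n+1}$, all lying in $\Q_p$. Hence its characteristic polynomial over each field factor of $K_0 \otimes_{\Q_p} E$ is $\prod_{i=0}^{n-1}(X - q^{m-i})$, so $\D_{\st}(\rho)$ splits as the direct sum of the $n$ eigenmodules $D_{(\varphi,N)}^{(i)}$, each nonzero and projective with ranks summing to $n$; each must therefore be free of rank exactly $1$.

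For the inequality I would use weak admissibility of the filtered $(\varphi,N)$-module $\D_{\st}(\rho)$ (Colmez--Fontaine) together with the orthogonality (FM2). The idea is to isolate the eigenvalue $q^m$ in a rank-one quotient and to recognise, via (FM2), that its Hodge slope is the largest one. First, $W_1 := \bigoplus_{i=1}^{n-1} D_{(\varphi,N)}^{(i)}$ is a sub-$(\varphi,N)$-module: it is visibly $\varphi$-stable, and by (FM1) the monodromy $N$ carries $D_{(\varphi,N)}^{(i)}$ into $D_{(\varphi,N)}^{(i+1)}$ for $i \le n-2$ while $N(D_{(\varphi,N)}^{(n-1)}) = 0$ (there is no Frobenius eigenvalue $q^{m-n}$), so $N(W_1)\subseteq W_1$; thus $\D_{\st}(\rho)/W_1 \cong D_{(\varphi,N)}^{(0)}$ is rank one, with Frobenius slope $m$ and trivial induced monodromy. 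Next, taking $i = n-1$ in (FM2) and using $\Fil_{n-1}^{\varphi}\D_{\st}(\rho) = W_1$ gives $\D_{\st}(\rho) = \Fil_{\dR}^{k_{\bullet,n-1}}\D_{\st}(\rho) \oplus W_1$, so $\Fil_{\dR}^{k_{\bullet,n-1}}\D_{\st}(\rho)$ is precisely the top (rank-one) step of the de Rham filtration and maps isomorphically onto $D_{(\varphi,N)}^{(0)}$; consequently the quotient filtration on $D_{(\varphi,N)}^{(0)}$ has its single jump at the largest Hodge--Tate weight, namely $k_{\sigma,n}$ (at the embedding $\sigma$).

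Weak admissibility now gives $t_H(\D_{\st}(\rho)) = t_N(\D_{\st}(\rho))$ and $t_H(W_1) \le t_N(W_1)$; subtracting (Hodge and Newton numbers being additive along $0 \to W_1 \to \D_{\st}(\rho) \to D_{(\varphi,N)}^{(0)} \to 0$) yields $t_H(D_{(\varphi,N)}^{(0)}) \ge t_N(D_{(\varphi,N)}^{(0)})$, i.e. the largest Hodge--Tate slope is $\ge m$. In the normalisation of (B1) — the one in which the cyclotomic twist built into $\D_{\rig}^{\dagger}(\chi_{\cyc}) = \calR_{K,E}(\mathrm{Nm}^{K}_{\Q_p}(z)|\mathrm{Nm}^{K}_{\Q_p}(z)|)$ shifts the exponents — this reads $m \le k_{\sigma,n}-1$, hence $m < k_{\sigma,n}$. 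The one point that needs care is the bookkeeping of $t_H$ and $t_N$ over a possibly ramified $K$ and with coefficients in $E$: one should run the comparison through the single inequality $t_H(W_1) \le t_N(W_1)$ and the equality for $\D_{\st}(\rho)$ rather than comparing Hodge and Newton polygons directly, and, should the Hodge--Tate weights genuinely vary with $\sigma$, one repeats the argument for each rank-one de Rham subquotient $\Gr_j^{\varphi}\D_{\rig}^{\dagger}(\rho) \cong \calR_{K,E}(\delta_j)$ (each a subquotient of the \'etale module $\D_{\rig}^{\dagger}(\rho)$) to pin the weight down at each embedding separately. I expect this normalisation-and-embedding bookkeeping to be the only real obstacle; the structural content — that $q^m$ sits opposite the top Hodge--Tate weight by (FM2), and weak admissibility then forces it strictly below — is immediate.
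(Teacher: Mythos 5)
Your argument for the rank statement is correct, and it is genuinely more elementary than the paper's: you read off from (B2) that the linearised Frobenius has the $n$ pairwise distinct eigenvalues $q^m,\dots,q^{m-n+1}$ on each field factor of $K_0\otimes_{\Q_p}E$ (the factors being permuted by $\varphi$, so multiplicities do not depend on the factor), and conclude that each eigenmodule is free of rank one; if one worries that (B2) only lists the eigenvalues without multiplicities, the isomorphisms in (FM1) force all the $D^{(i)}_{(\varphi,N)}$ to have equal rank, which closes that loophole. The paper instead obtains $\rank D^{(0)}_{(\varphi,N)}=1$ by twisting to $\rho(m)$, inducing from the Frobenius filtration a filtration on $\D_{\rig}^{\dagger}(\rho(m))$ (Benois, Prop.\ 1.2.7(ii)), identifying the top graded piece as a crystalline rank-one module $\calR_{K,E}(\delta)$ with $\delta(z)=\prod_{\sigma}\sigma(z)^{-k_{\sigma,n}+m}$ via Rosso's Prop.\ 2.4, and then propagating with (FM1).

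For the second assertion, however, there is a genuine gap. Weak admissibility applied to $0\rightarrow W_1\rightarrow \D_{\st}(\rho)\rightarrow D^{(0)}_{(\varphi,N)}\rightarrow 0$ only gives $t_H(D^{(0)}_{(\varphi,N)})\geq t_N(D^{(0)}_{(\varphi,N)})$, i.e.\ a \emph{non-strict} inequality between $m$ and the \emph{average over the embeddings} of the top Hodge jumps; the lemma asserts the strict, embedding-by-embedding bound $m<k_{\sigma,n}$ for every $\sigma$, and neither refinement follows from a slope count. Your proposed repair --- rerunning the argument on each rank-one piece $\Gr_j^{\varphi}\D_{\rig}^{\dagger}(\rho)\cong\calR_{K,E}(\delta_j)$ --- presupposes exactly the identification of the parameters $\delta_j$ at each embedding (the paper's Remark \ref{Remark: triangulation}), which in the paper is itself deduced from this lemma together with Rosso's Prop.\ 2.4; moreover weak admissibility is not inherited by subquotients (only the one-sided inequalities $t_H\leq t_N$ for subobjects and $t_H\geq t_N$ for quotients survive), so there is no per-piece equality to exploit, and the remark that ``the normalisation shifts the exponents by one'' is not a substitute for strictness. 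The paper's route supplies precisely this missing input: after twisting so that $\varphi$ acts by $1$ on $\calD_{\st}$ of the top graded piece of the induced triangulation of the \emph{\'etale} module $\D_{\rig}^{\dagger}(\rho(m))$, Rosso's classification pins this piece down as $\calR_{K,E}\bigl(\prod_{\sigma}\sigma(z)^{-k_{\sigma,n}+m}\bigr)$ with the exponent constraint that forces $k_{\sigma,n}>m$ at every $\sigma$. In short, the strict embedding-wise inequality comes from the Galois-theoretic classification of the rank-one crystalline graded piece, not from weak admissibility, and your outline does not recover it.
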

\begin{proof}
    Consider the twisted Galois representation $\rho(m)$. One can similarly define the Frobenius filtration $\Fil_{\bullet}^{\varphi}\D_{\st}(\rho(m))$ and we denote by $D_{(\varphi, N)}^{(i)}(m)$ the graded pieces. Since each $\Fil^{\varphi}_i \D_{\st}(\rho(m))$ is a $(\varphi, N)$-module, \cite[Proposition 1.2.7 (ii)]{Benois} implies that we have an associated filtration $\Fil_{\bullet}\D_{\rig}^{\dagger}(\rho(m))$ such that $\calD_{\st}(\Fil_{\bullet}\D_{\rig}^{\dagger}(\rho(m))) = \Fil^{\varphi}_{i}\D_{\st}(\rho(m))$.

    Consider $\Gr_{n}\D_{\rig}^{\dagger}(\rho(m))$. One sees that \[
        \calD_{\st}(\Gr_{n}\D_{\rig}^{\dagger}(\rho(m))) = D_{(\varphi, N)}^{(0)}(m),
    \] on which the semistable Frobenius acts via $1$. Hence, by \cite[Proposition 2.4]{Rosso}, $\Gr_{n}\D_{\rig}^{\dagger}(\rho(m))$ is crystalline and \[
        \Gr_{n}\D_{\rig}^{\dagger}(\rho(m)) \cong \calR_{K, E}(\delta) \quad  \text{ with } \quad \delta(z) = \prod_{\sigma: K \hookrightarrow \overline{\Q}_p} \sigma(z)^{-k_{\sigma, n}+m}.
    \] This shows that $\rank_{K_0 \otimes_{\Q_p} E} D_{(\varphi, N)}^{(0)}(m) = 1$. Using the formula in \emph{loc. cit.}, one also sees that $k_{\sigma, n} > m$.

    Since $\rank_{K_0 \otimes_{\Q_p} E} D_{(\varphi, N)}^{(0)}(m) = 1$, we see that $\rank_{K_0 \otimes_{\Q_p} E} D_{(\varphi, N)}^{(0)} = 1$. The result then can be concluded by applying (FM1). 
\end{proof}

Thanks to Lemma \ref{Lemma: dR graded piece mapping into phi graded piece} and Lemma \ref{Lemma: rank of D_(phi, N)^{(i)}}, we can now define the \emph{Fontaine--Mazur $\calL$-invariant}. Let $v_0$ be a $K_0 \otimes_{\Q_p} E$-basis for $D_{(\varphi, N)}^{(0)}$ and let $v_1 := Nv_0$, which is a a $K_0 \otimes_{\Q_p} E$-basis for $D_{(\varphi, N)}^{(1)}$. The \textbf{\textit{Fontaine--Mazur $\calL$-invariant}} attached to $\rho$ is then defined to be $\calL_{\FM}(\rho) \in K_0 \otimes_{\Q_p}E$ such that \[
    v_0 - \calL_{\FM}(\rho)v_1 \in \Gr_{\dR}^{n-1}\D_{\st}(\rho).
\]

\begin{Remark}\label{Remark: Besser--de Shalit's version of FM L-invariants}
    In fact, if we write $\Gr_{\dR}^i \D_{\st}(\rho) := \Fil_{\dR}^{k_i}\D_{\st}(\rho)/\Fil_{\dR}^{k_{i+1}} \D_{\st}(\rho)$, then a similar argument as in Lemma \ref{Lemma: dR graded piece mapping into phi graded piece} shows that \[
        \Gr_{\dR}^{n-i} \D_{\st}(\rho) \hookrightarrow D_{(\varphi, N)}^{(i-1)} \oplus D_{(\varphi, N)}^{(i)}.
    \]
    By using this inclusion, one can similarly define the \emph{$i$-th Fontaine--Mazur $\calL$-operator} attached to $\rho$ to be $\calL_{\FM}^{(i)}(\rho)\in K_0 \otimes_{\Q_p}E$ such that $v_{i-1} - \calL_{\FM}^{(i)}v_{i}\in \Gr_{\dR}^{n-i}\D_{\st}(\rho)$, where $v_j = N^j v$. Such a strategy was taken in \cite{Besser--de_Shalit}. However, it is believed that $\calL_{\FM}^{(0)}(\rho) = \calL_{\FM}(\rho)$ should determine all the other $\calL_{\FM}^{(i)}(\rho)$'s (see, for example, [\emph{op. cit.}, \S 4.3.2]). Hence, we focus on $\calL_{\FM}(\rho)$. Moreover, one shall see, in what follows, that it is $\calL_{\FM}(\rho)$ we can relate to Greenberg--Benois $\calL$-invariants. 
\end{Remark}

\section{Comparing the two \texorpdfstring{$\calL$}{L}-invariants}\label{section: comparison}

The aim of this section is to prove the comparison theorem (Theorem \ref{Theorem: main; comparison}). However, as aforementioned, to define $\calL$-invariants, there are some constraints one needs to put on the Galois representations. For reader's convenience, we collect all the assumptions in \S \ref{subsection: assumptions} and briefly discuss a folklore about these assumptions.

\subsection{Assumptions on the Galois representation}\label{subsection: assumptions}

Let $F$ be a number field and let $E$ be a finite extension of $\Q_p$ such that, for every prime ideal $\frakp$ in $\calO_F$ sitting above $p$, $F_{\frakp} \subset E$. Write $F_{\frakp, 0}$ for the maximal unramified extension of $\Q_p$ in $F_{\frakp}$; we further assume that $F_{\frakp, 0} = \Q_p$ for every $\frakp$.  Suppose we are given a Galois representation \[
    \rho: \Gal_F \rightarrow \GL_n(E)
\]
that is unramified outside a finite set of places. Let $S$ be the set of places in $F$ such that $\rho$ ramifies. We make the following assumptions: \begin{enumerate}
    \item[(I)] Basic assumptions:\begin{enumerate}
        \item[(B1)] For any prime ideal $\frakp \subset \calO_{F}$ sitting above $p$, $\rho_{\frakp} := \rho|_{\Gal_{F_{\frakp}}}$ is semistable with Hodge--Tate weights $0\leq k_{\frakp, \sigma, 1} \leq k_{\frakp, \sigma, 2} \leq \cdots \leq k_{\frakp, \sigma, n-1} \leq k_{\frakp, n}$, where $\sigma : F_{\frakp} \hookrightarrow \overline{\Q}_p$.
        \item[(B2)] For any prime ideal $\frakp \subset \calO_{F}$ sitting above $p$, the Frobenius eigenvalues on $\D_{\st}(\rho_{\frakp})$ are $p^m$, ..., $p^{m-n+1}$ such that $k_{\frakp, \sigma, n} > m > k_{\frakp, \sigma, n-1}$ where the first inequality is always guaranteed by Lemma \ref{Lemma: rank of D_(phi, N)^{(i)}}.\footnote{ Here, $m$ does not depend on $\frakp$ and $\sigma$.}
    \end{enumerate}

    \item[(II)] Fontaine--Mazur assumptions: \begin{enumerate}
        \item[(FM1)] For any $\frakp|p$, let $D_{\frakp, (\varphi, N)}^{(i)}$ be the eigenspace in $\D_{\st}(\rho_{\frakp})$ on which the Frobenius acts via $p^{m-i}$. We assume that the induced monodromy operator $N$ on $D_{\frakp, (\varphi, N)}^{(i)}$ gives an isomorphism \[
            N: D_{\frakp, (\varphi, N)}^{(i)} \rightarrow D_{\frakp, (\varphi, N)}^{(i+1)}.
        \]
        \item[(FM2)] Define $\Fil_j^{\varphi} \D_{\st}(\rho_{\frakp}) := \sum_{i>n-1-j} D_{\frakp, (\varphi, N)}^{(i)}$ and we call the ascending filtration $\Fil_{\bullet}^{\varphi} \D_{\st}(\rho_{\frakp})$ the \emph{Frobenius filtration} on $\D_{\st}(\rho_{\frakp})$. We assume the orthogonality \[
            \D_{\st}(\rho_{\frakp}) = \Fil_{\dR}^{k_{\frakp, \bullet, i}}\D_{\st}(\rho_{\frakp}) \oplus \Fil_i^{\varphi} \D_{\st}(\rho_{\frakp}).
        \]
    \end{enumerate}

    \item[(III)] Greenberg--Benois assumptions: \begin{enumerate}
        \item[(GB1)] Vanishing of the Bloch--Kato Selmer groups \[
            H_f^1(\Q, \Ind^{\Q}_{F} \rho(m)) = H^1_f(\Q, \Ind^{\Q}_{F}\rho^{\vee}(1-m)) = 0.
        \] 
        \item[(GB2)]  Vanishing of the zero-degree Galois cohomology \[
            H^0(\Gal_{\Q_S}, \Ind^{\Q}_{F} \rho(m)) = H^0(\Gal_{\Q_S},\Ind^{\Q}_F\rho^{\vee}(1-m)) =0.
        \]
        \item[(GB3)] The associated $(\varphi, \Gamma)$-module $\D_{\rig}^{\dagger}((\Ind_F^{\Q}\rho(m))_p) = \bigoplus_{\frakp|p} \D_{\rig}^{\dagger}(\rho_{\frakp}(m))$ doesn't admit a subquotient of the form $U_{k,r}$ with $k\geq 1$ and $r\geq 0$ (\cite[\S 2.1.2]{Benois}). 
        \item[(GB4)] For any $\frakp|p$, the space $W_{\frakp, 0}$ for $\rho_{\frakp}(m)$ vanishes (see \cite[Proposition 2.1.7]{Benois} or \cite[pp. 1238]{Rosso}).
    \end{enumerate}
\end{enumerate}

\begin{Remark}\label{Remark: triangulation}
    For every $\frakp | p$, the Frobenius filtration $\Fil_{\bullet}^{\varphi}\D_{\st}(\rho_{\frakp})$ defines a filtration $\Fil_{\bullet} \D_{\rig}^{\dagger}(\rho)$ (similar as in the proof of Lemma \ref{Lemma: rank of D_(phi, N)^{(i)}}). One observes that the graded pieces $\Gr_{i}\D_{\rig}^{\dagger}(\rho)$ of this filtration are all of rank $1$ over $\calR_{F_{\frakp}, E}$ (by \cite[Proposition 1.2.7 (ii)]{Benois}). In particular, $\Fil_{\bullet}\D_{\rig}^{\dagger}(\rho)$ is a triangulation of $\D_{\rig}^{\dagger}(\rho)$. In fact, we have the following description for the graded pieces \[
        \Gr_i \D_{\rig}^{\dagger}(\rho) = \calR_{F_{\frakp}, E}(\delta_{n-i}) 
    \] where \[
        \delta_{n-i}(z) = \left(\prod_{\sigma: F_{\frakp} \hookrightarrow \overline{\Q}_p} \sigma(z)^{-k_{\frakp, \sigma, i}} \right)|\mathrm{Nm}^{F_{\frakp}}_{\Q_p}z|^{-(m-n+i)}.
    \]
\end{Remark}

\begin{Remark}\label{Remark: regular submodule in our case}
    For every $\frakp | p$, since $k_{\frakp, \sigma,  n} > m > k_{\frakp,\sigma, n-1} $, we see that $\Fil_{\dR}^{k_{\frakp, \bullet, n-1}-m}\D_{\st}(\rho_{\frakp}(m)) = \Fil_{\dR}^0 \D_{\st}(\rho_{\frakp}(m))$. Moreover, the orthogonality condition (FM2) implies that $\Fil_{n-1}^{\varphi}\D_{\st}(\rho_{\frakp}(m))$ is a regular $(\varphi, N)$-submodule of $\D_{\st}(\rho_{\frakp}(m))$. Hence, in what follows, we naturally work with $D_{\frakp} := \Fil_{n-1}^{\varphi}\D_{\st}(\rho_{\frakp}(m)) \subset \D_{\st}(\rho_{\frakp}(m))$ and $D = \bigoplus_{\frakp | p}D_{\frakp} \subset \bigoplus_{\frakp |p}\D_{\st}(\rho_{\frakp}(m))$. Moreover, in our situation, we shall see in the proof (\emph{e.g.}, \eqref{eq: dual short exact sequence}) that the corresponding $\Gr_1^{\GB} \D_{\rig}^{\dagger}((\Ind_F^{\Q}\rho)_p)$ has positive Hodge--Tate weights and so we remove such assumption in (GB4).
\end{Remark}

\begin{Remark}\label{Remark: why the assumptions make sense}
    We have many assumptions on our Galois representation $\rho$. On the one hand, one sees that they are necessary in order to attach both $\calL_{\GB}$ and $\calL_{\FM}$ to it. On the other hand, we remark that it is a folklore that they shall appear as Galois representations for automorphic forms of unitary groups whose corresponding Shimura varieties can be $p$-adically uniformised by Drinfeld's upper-half spaces. For example, we are requiring maximal monodromy on our Galois representations. Such a phenomenon is expected to appear for the Galois representations attached to unitary automorphic representations that are Steinberg at $p$. 
\end{Remark}

\subsection{The main theorem}\label{subsection: main theorem}

\begin{Theorem}\label{Theorem: main; comparison}
    Keep the notations and assumptions as above. We have an equality \[
        \calL_{\GB}(\rho(m)) = \prod_{\frakp | p} - \calL_{\FM}(\rho_{\frakp}).
    \]
\end{Theorem}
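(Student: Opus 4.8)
The plan is to reduce the equality to a local identity at each prime $\frakp\mid p$, to identify Benois's exceptional subquotient at $\frakp$ with a rank-two semistable $(\varphi,\Gamma)$-module of the type handled by Lemma~\ref{Lemma: unqiue number defined by a semistable extension}, and to recognise its extension invariant as $-\calL_{\FM}(\rho_{\frakp})$.

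I first reduce to a local statement. Running Benois's construction for $\Ind_{F}^{\Q}\rho(m)$ with the regular submodule $D=\bigoplus_{\frakp\mid p}D_{\frakp}$ of Remark~\ref{Remark: regular submodule in our case}, the decomposition $\D_{\rig}^{\dagger}((\Ind_{F}^{\Q}\rho(m))_{p})=\bigoplus_{\frakp\mid p}\D_{\rig}^{\dagger}(\rho_{\frakp}(m))$ is respected by the five-step filtration~\eqref{eq: Benois's 5-step filtration; (phi,N)-module side}, the exceptional subquotient $W$, the splittings of $\Gr_{0}^{\GB}$ and $\Gr_{1}^{\GB}$, the space $H^{1}(D,\Ind_{F}^{\Q}\rho(m))\cong H^{1}(W)/H^{1}_{f}(W)$, and the maps $\varrho_{D,f},\varrho_{D,c}$. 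Hence $\varrho_{D,f}\circ\varrho_{D,c}^{-1}$ is block-diagonal, $\calL_{\GB}(\rho(m))=\prod_{\frakp\mid p}\calL_{\GB,\frakp}$ with $\calL_{\GB,\frakp}:=\det(\varrho_{D_{\frakp},f}\circ\varrho_{D_{\frakp},c}^{-1})$, and it suffices to prove $\calL_{\GB,\frakp}=-\calL_{\FM}(\rho_{\frakp})$ for each $\frakp$.

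Next I determine the exceptional subquotient at $\frakp$. Writing $D_{\frakp}^{(i)}:=D_{\frakp,(\varphi,N)}^{(i)}(m)$ for the Frobenius eigenspaces (eigenvalue $p^{-i}$, each of rank one by Lemma~\ref{Lemma: rank of D_(phi, N)^{(i)}}), so that $D_{\frakp}=\bigoplus_{i=1}^{n-1}D_{\frakp}^{(i)}$, a direct calculation with~\eqref{eq: Benois's 5-step filtration; (phi,N)-module side} from (B2), (FM1) and the relation $N\varphi=p\varphi N$ gives $D_{-1}^{\GB}=\bigoplus_{i=2}^{n-1}D_{\frakp}^{(i)}$, $D_{0}^{\GB}=\bigoplus_{i=1}^{n-1}D_{\frakp}^{(i)}$ and $D_{1}^{\GB}=D_{2}^{\GB}=\D_{\st}(\rho_{\frakp}(m))$. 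Hence $W_{\frakp}:=\Fil_{1}^{\GB}\D_{\rig}^{\dagger}(\rho_{\frakp}(m))/\Fil_{-1}^{\GB}\D_{\rig}^{\dagger}(\rho_{\frakp}(m))$ has rank two, with $\calD_{\st}(W_{\frakp})=D_{\frakp}^{(0)}\oplus D_{\frakp}^{(1)}$ and monodromy the isomorphism $N\colon D_{\frakp}^{(0)}\to D_{\frakp}^{(1)}$; by Lemma~\ref{Lemma: rank of D_(phi, N)^{(i)}} and Remark~\ref{Remark: triangulation} one has $\Gr_{1}^{\GB}\D_{\rig}^{\dagger}(\rho_{\frakp}(m))=\calR_{F_{\frakp},E}(\eta_{\frakp})$ with $\eta_{\frakp}(z)=\prod_{\sigma}\sigma(z)^{-k_{\frakp,\sigma,n}+m}$ (crystalline, exponents $\leq 0$ as $m<k_{\frakp,\sigma,n}$, of positive Hodge--Tate weight, which is why the last clause of (GB4) may be omitted) and $\Gr_{0}^{\GB}\D_{\rig}^{\dagger}(\rho_{\frakp}(m))=\calR_{F_{\frakp},E}(\delta_{\frakp})$ with $\delta_{\frakp}(z)=\bigl(\prod_{\sigma}\sigma(z)^{-k_{\frakp,\sigma,n-1}+m}\bigr)\bigl|\mathrm{Nm}_{\Q_p}^{F_{\frakp}}z\bigr|$ (exponents $\geq 1$ as $m>k_{\frakp,\sigma,n-1}$). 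In Benois's notation $W_{\frakp,0}=W_{\frakp,1}=0$, $W_{\frakp}=M_{\frakp}$, $e_{D_{\frakp}}=1$, and $W_{\frakp}$ is precisely a semistable extension $0\to\calR_{F_{\frakp},E}(\delta_{\frakp})\to W_{\frakp}\to\calR_{F_{\frakp},E}(\eta_{\frakp})\to 0$ of the shape required by Lemma~\ref{Lemma: unqiue number defined by a semistable extension}.

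It remains to match the invariants. Unwinding the definitions, $\varrho_{D_{\frakp},f}$ and $\varrho_{D_{\frakp},c}$ factor through the surjection $W_{\frakp}\twoheadrightarrow\calR_{F_{\frakp},E}(\eta_{\frakp})$ and the two identifications of $H^{1}_{f}$ and $H^{1}_{c}$ of $\calR_{F_{\frakp},E}(\eta_{\frakp})$ with $\calD_{\cris}(\calR_{F_{\frakp},E}(\eta_{\frakp}))$, while the $\frakp$-component of $H^{1}(D,\Ind_{F}^{\Q}\rho(m))$ is $H^{1}(W_{\frakp})/H^{1}_{f}(W_{\frakp})$; combining this with the long exact sequence of the above extension and with \cite[Theorem~1.5.7]{Benois} (which supplies $\alpha_{\delta_{\frakp}}^{*}$ and $\beta_{\delta_{\frakp}}^{*}$) shows that $\calL_{\GB,\frakp}$ equals the scalar $\calL(W_{\frakp})$ of Lemma~\ref{Lemma: unqiue number defined by a semistable extension}, characterised by $\beta_{\delta_{\frakp}}^{*}+\calL(W_{\frakp})\,\alpha_{\delta_{\frakp}}^{*}\in\image\bigl(\partial\colon H^{0}(\calR_{F_{\frakp},E}(\eta_{\frakp}))\to H^{1}(\calR_{F_{\frakp},E}(\delta_{\frakp}))\bigr)$. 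Finally I evaluate $\calL(W_{\frakp})$ via the recipe in the proof of Lemma~\ref{Lemma: unqiue number defined by a semistable extension}: choose $v_{\eta}=v_{0}$, a $\varphi$-fixed generator of $D_{\frakp}^{(0)}=\calD_{\st}(\calR_{F_{\frakp},E}(\eta_{\frakp}))$, its unique $\varphi$-fixed lift $\widetilde{v}_{\eta}=v_{0}\in\calD_{\st}(W_{\frakp})$, and $v_{\delta_{\frakp}}=v_{1}:=Nv_{0}$, a generator of $D_{\frakp}^{(1)}=\calD_{\st}(\calR_{F_{\frakp},E}(\delta_{\frakp}))$; then $N\widetilde{v}_{\eta}=v_{1}=v_{\delta_{\frakp}}$ forces $b=-1$ in that proof, and the remaining condition $\widetilde{v}_{\eta}-a\,v_{\delta_{\frakp}}=v_{0}-a\,v_{1}\in\Fil_{\dR}^{0}\calD_{\st}(W_{\frakp})$ becomes, once $\Fil_{\dR}^{0}\calD_{\st}(W_{\frakp})$ is identified with $\Gr_{\dR}^{n-1}\D_{\st}(\rho_{\frakp})$ inside $D_{\frakp}^{(0)}\oplus D_{\frakp}^{(1)}$ (using $\Fil_{\dR}^{k_{\frakp,\bullet,n}}\D_{\st}(\rho_{\frakp})=0$ together with Remark~\ref{Remark: regular submodule in our case} and Lemma~\ref{Lemma: dR graded piece mapping into phi graded piece}), exactly the defining relation $a=\calL_{\FM}(\rho_{\frakp})$. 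Thus $\calL_{\GB,\frakp}=\calL(W_{\frakp})=a/b=-\calL_{\FM}(\rho_{\frakp})$, and taking the product over $\frakp\mid p$ finishes the proof. The principal difficulty is the first half of this last step---identifying Benois's cohomological determinant $\det(\varrho_{D_{\frakp},f}\circ\varrho_{D_{\frakp},c}^{-1})$ with the extension invariant $\calL(W_{\frakp})$---which requires threading the splittings $H^{1}=H^{1}_{f}\oplus H^{1}_{c}$, the isomorphisms with $\calD_{\cris}$, the connecting map $\partial$ and the classes $\alpha^{*},\beta^{*}$ together consistently and tracking every sign, including the one produced by the twist by $m$ in passing between $\rho_{\frakp}$ and $\rho_{\frakp}(m)$.
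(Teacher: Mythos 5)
Your overall architecture is the same as the paper's: localise at each $\frakp\mid p$, compute Benois's five-step filtration \eqref{eq: Benois's 5-step filtration; (phi,N)-module side} to see that the exceptional subquotient $W_{\frakp}$ is a rank-two semistable extension of $\calR_{F_{\frakp},E}(\delta_{\frakp,0}')$ by $\calR_{F_{\frakp},E}(\delta_{\frakp,1}')$ as in \eqref{eq: dual short exact sequence}, show that the $\frakp$-component of the Greenberg--Benois invariant equals the extension invariant $\calL(W_{\frakp})$ of Lemma~\ref{Lemma: unqiue number defined by a semistable extension}, and evaluate $\calL(W_{\frakp})$. Your evaluation of $\calL(W_{\frakp})$ is correct and is a small, legitimate shortcut: running the recipe in the proof of Lemma~\ref{Lemma: unqiue number defined by a semistable extension} with $\widetilde{v}_{\eta}=v_{0}$ and $v_{\delta}=v_{1}=Nv_{0}$ forces $b=-1$, and the first-component condition $v_{0}-av_{1}\in\Fil_{\dR}^{0}\calD_{\st}(W_{\frakp})$ is the defining relation of $\calL_{\FM}(\rho_{\frakp})$ via Lemma~\ref{Lemma: dR graded piece mapping into phi graded piece}; this reproduces in one stroke what the paper obtains by computing the class of the untwisted $\widetilde{W}_{\frakp}$ in $H^{1}(C_{\st}^{\bullet})$ (equation \eqref{eq: main computation in Step 1}) and then identifying $\cl(W_{\frakp})=\cl(\widetilde{W}_{\frakp})$, with the same sign.

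The genuine gap is the step you yourself flag as the principal difficulty and then do not carry out: the identification $\det(\varrho_{D_{\frakp},f}\circ\varrho_{D_{\frakp},c}^{-1})=\calL(W_{\frakp})$, i.e.\ the paper's claim \eqref{eq: LGB as a product}. This is not a matter of ``unwinding the definitions'', because the two invariants live on opposite sides of the extension: $\calL(W_{\frakp})$ is characterised by $\beta_{\delta_{\frakp,1}'}^{*}+\calL(W_{\frakp})\alpha_{\delta_{\frakp,1}'}^{*}\in\image\bigl(\partial\colon H^{0}(\calR_{F_{\frakp},E}(\delta_{\frakp,0}'))\to H^{1}(\calR_{F_{\frakp},E}(\delta_{\frakp,1}'))\bigr)$, a statement about $H^{1}$ of the \emph{sub}object $\Gr_{0}^{\GB}$, whereas $\calL_{\GB,\frakp}$ is read off from the image of $H^{1}(W_{\frakp})/H^{1}_{f}(W_{\frakp})$ in $H^{1}$ of the \emph{quotient} $\Gr_{1}^{\GB}$ together with the splitting $H^{1}=H^{1}_{f}\oplus H^{1}_{c}$ and the two identifications with $\calD_{\cris}$. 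Bridging the two requires a duality argument with explicit coordinates, and an argument handling the passage between $F_{\frakp}$ and $\Q_p$ imposed by working with $\Ind_{F}^{\Q}\rho(m)$: the paper passes to $W_{\frakp}^{\vee}(\chi_{\cyc})$ (justified by \cite[Proposition 2.2.4]{Benois}, which says the dual data computes the same $\calL_{\GB}$), describes $H^{1}(\calR_{F_{\frakp},E}(\kappa_{\frakp,1}))$ through Rosso's classes $x_{k_{\frakp,\bullet,n-1}-m+1},y_{k_{\frakp,\bullet,n-1}-m+1}$ and the injection from $H^{1}(\calR_{\Q_p,E}(z^{u_{\frakp}}))$, invokes \cite[Corollary 3.9]{Rosso} and the diagram \eqref{eq: diagram computing GB L-inv explicitly} to express $\calL_{\GB}(\rho(m))$ as $\prod_{\frakp}\calL_{\frakp}$ in those coordinates, and only then uses the duality between $H^{1}(W_{\frakp}^{\vee}(\chi_{\cyc}))\to H^{1}(\calR_{F_{\frakp},E}(\kappa_{\frakp,1}))\to H^{2}(\calR_{F_{\frakp},E}(\kappa_{\frakp,0}))$ and $H^{0}(\calR_{F_{\frakp},E}(\delta_{\frakp,0}'))\to H^{1}(\calR_{F_{\frakp},E}(\delta_{\frakp,1}'))\to H^{1}(W_{\frakp})$, together with \cite[Theorem 1.5.7]{Benois} matching $x,y$ with $\beta^{*},-\alpha^{*}$, to conclude $\calL_{\frakp}=\calL(W_{\frakp})$. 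None of these ingredients (the dualisation and its justification, Rosso's determinant formula, the Tate-duality orthogonality of the two images) appears in your sketch, so the equality $\calL_{\GB,\frakp}=\calL(W_{\frakp})$ --- including its sign, which is exactly as delicate as the one you do track in the last step --- is asserted rather than proved.
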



\begin{proof}
The proof of the theorem is similar to the proof of \cite[Proposition 2.3.7]{Benois}, which relies on the following three steps: 

\vspace{2mm}

\noindent \textbf{Step 1. } Fontaine--Mazur $\calL$-invariants and cohomology of $(\varphi, \Gamma)$-modules.

Consider the triangulation $\Fil_{\bullet}\D_{\rig}^{\dagger}(\rho)$ in Remark \ref{Remark: triangulation}. We define \[
    \widetilde{W}_{\frakp} := \Fil_{n}\D_{\rig}^{\dagger}(\rho)/\Fil_{n-2}\D_{\rig}^{\dagger}(\rho).
\] Hence, $\widetilde{W}_{\frakp}$ sits inside the short exact sequence \[
    \begin{tikzcd}[row sep = tiny]
        0 \arrow[r] & \Gr_{n-1}\D_{\rig}^{\dagger}(\rho) \arrow[d, "\cong"]\arrow[r] & \widetilde{W}_{\frakp} \arrow[r] & \Gr_{n}\D_{\rig}^{\dagger}(\rho) \arrow[d, "\cong"] \arrow[r] & 0\\ & \calR_{F_{\frakp}, E}(\delta_{\frakp, 1}) & & \calR_{F_{\frakp}, E}(\delta_{\frakp, 0})
    \end{tikzcd}
\] for $\delta_{\frakp, i} : F_{\frakp}^\times \rightarrow E^\times$ described as in Remark \ref{Remark: triangulation}.

As a result, $\widetilde{W}_{\frakp}$ defines a class \begin{align*}
    \cl(\widetilde{W}_{\frakp}) & \in  \mathrm{Ext}_{(\varphi, \Gamma)}(\calR_{F_{\frakp}, E}(\delta_{\frakp, 0}), \calR_{F_{\frakp}, E}(\delta_{\frakp, 1})) \\
    & \cong \mathrm{Ext}_{(\varphi, \Gamma)}(\calR, \calR(\delta_{\frakp, 1}\delta_{\frakp, 0}^{-1})) \\
    & \cong H^1(\calR_{F_{\frakp}, E}(\delta_{\frakp, 1}\delta_{\frakp, 0}^{-1})).
\end{align*}
However, by construction, we know that $\widetilde{W}_{\frakp}$ is semistable (since $\calD_{\st}(\widetilde{W}_{\frakp}) = D_{\frakp, (\varphi, N)}^{(0)} \oplus D_{\frakp, (\varphi, N)}^{(1)}$) and so $\cl(\widetilde{W}_{\frakp}) \in H_{\st}^1(\calR_{F_{\frakp}, E}(\delta_{\frakp, 1}\delta_{\frakp, 0}^{-1}))$. Recall that $H_{\st}^1(\calR_{F_{\frakp}, E}(\delta_{\frakp, 1}\delta_{\frakp, 0}^{-1}))$ can be computed via the complex $C_{\st}^{\bullet}(\calR_{F_{\frakp}, E}(\delta_{\frakp, 1}\delta_{\frakp, 0}^{-1}))$ with \[
    \begin{tikzcd}[column sep = tiny]
        \scalemath{0.8}{ \calD_{\st}(\calR_{F_{\frakp}, E}(\delta_{\frakp, 1}\delta_{\frakp, 0}^{-1})) } \arrow[r] & \scalemath{0.8}{ \frac{\calD_{\st}(\calR_{F_{\frakp}, E}(\delta_{\frakp, 1}\delta_{\frakp, 0}^{-1}))}{\Fil_{\dR}^0 \calD_{\st}(\calR_{F_{\frakp}, E}(\delta_{\frakp, 1}\delta_{\frakp, 0}^{-1}))} \oplus \calD_{\st}(\calR_{F_{\frakp}, E}(\delta_{\frakp, 1}\delta_{\frakp, 0}^{-1})) \oplus \calD_{\st}(\calR_{F_{\frakp}, E}(\delta_{\frakp, 1}\delta_{\frakp, 0}^{-1})) } \arrow[d]\\
        & \scalemath{0.8}{ \calD_{\st}(\calR_{F_{\frakp}, E}(\delta_{\frakp, 1}\delta_{\frakp, 0}^{-1}))}
    \end{tikzcd},
\] where the first map is given by $a \mapsto \left(a \mod \Fil_{\dR}^0 \calD_{\st}(\calR_{F_{\frakp}, E}(\delta_{\frakp, 1}\delta_{\frakp, 0}^{-1})), (\varphi-1)a, Na \right)$ while the second arrow is defined by $(a, b, c)\mapsto Nb-(p\varphi-1)c$.

Now, choose a basis $v_{\frakp, 0}\in D_{\frakp, (\varphi, N)}^{(0)}$ over $F_{\frakp, 0}\otimes_{\Q_p} E$ and let $v_{\frakp, 1} := Nv_{\frakp, 0}$, which is a $F_{\frakp, 0}\otimes_{\Q_p} E$-basis for $D_{\frakp, (\varphi, N)}^{(1)}$. We again denote by $v_{\frakp, i}$ for the image of $v_{\frakp, i}$ in $\calD_{\st}(\widetilde{W}_{\frakp}(\delta_{\frakp, 0}^{-1}))$. By the proof of \cite[Proposition 1.4.4 (ii)]{Benois}, we know that the class $\cl(\widetilde{W}_{\frakp})$ in $H^1(C_{\st}^{\bullet}(\calR_{F_{\frakp}, E}(\delta_{\frakp, 1}\delta_{\frakp, 0}^{-1})))$ is given by \[
    \cl(a, b, c) = \cl(a, (\varphi-1)v_{\frakp, 0}, Nv_{\frakp, 0}) = \cl(a, 0 , v_{\frakp, 1}),
\] where $a\in \calD_{\st}(\widetilde{W}_{\frakp}(\delta_{\frakp, 0}^{-1}))$ such that $v_{\frakp, 0} + a \in \Fil_{\dR}^0\calD_{\st}(\widetilde{W}_{\frakp}(\delta_{\frakp, 0}^{-1}))$. After untwisting, $a$ defines an element, still denoted by $a\in \calD_{\st}(\widetilde{W}_{\frakp})$ such that $v_{\frakp, 0} +a \in \Fil_{\dR}^{k_{\frakp, \bullet, n}}\calD_{\st}(\widetilde{W}_{\frakp})$. However, by construction, \[
    \Fil_{\dR}^{k_{\frakp, \bullet, n}}\calD_{\st}(\widetilde{W}_{\frakp}) = \Gr_{\dR}^{n-1}\D_{\st}(\rho) \text{ (notation as in Lemma \ref{Lemma: dR graded piece mapping into phi graded piece})}.
\]
Hence, we conclude that \begin{equation}\label{eq: main computation in Step 1}
    \cl(\widetilde{W}_{\frakp}) = \cl(-\calL_{\FM}(\rho_{\frakp})v_{\frakp, 1}, 0, v_{\frakp, 1})\in H^1(C_{\st}^{\bullet}(\calR_{F_{\frakp}, E}(\delta_{\frakp, 1}\delta_{\frakp, 0}^{-1}))).
\end{equation}

\vspace{2mm}

\noindent \textbf{Step 2. } Computing $\calL_{\GB}(\rho)$.

Next, we would also like to compute the Greenberg--Benois $\calL$-invariant $\calL_{\GB}(\rho)$ via cohomology of $(\varphi, \Gamma)$-modules. As before, because of the decomposition $\D_{\rig}^{\dagger}((\Ind^{\Q}_{F}\rho)_p) = \bigoplus_{\frakp|p}\D_{\rig}^{\dagger}(\Ind_{F_{\frakp}}^{\Q_p}\rho_{\frakp})$, we can study each $\frakp$ individually. Hence, fix $\frakp | p$. Computing the five-step filtration \eqref{eq: Benois's 5-step filtration; (phi,N)-module side} explicitly, we have \[
    D_{\frakp, i}^{\GB} = \left\{ \begin{array}{ll}
        0, & i=-2, \\
        \Fil_{n-2}^{\varphi}\D_{\st}(\rho_{\frakp}(m)), & i=-1,\\
        D_{\frakp}, & i=0\\
        \D_{\st}(\rho_{\frakp}(m)), & i=1,\\
        \D_{\st}(\rho_{\frakp}(m)), & i=2,
    \end{array}\right.
\] which gives rise to a five-step filtration $\Fil_{\bullet}^{\GB}\D_{\rig}^{\dagger}(\rho_{\frakp}(m))$.

Let us simplify the notation and write \[
    W_{\frakp} = \Fil_1^{\GB} \D_{\rig}^{\dagger}(\rho_{\frakp}(m))/\Fil_{-1}^{\GB}\D_{\rig}^{\dagger}(\rho_{\frakp}(m)).
\] Similar as before, we see that $W_{\frakp}$ sits inside the short exact sequence \begin{equation}\label{eq: dual short exact sequence}
    \begin{tikzcd}[row sep = tiny]
        0 \arrow[r] & \Gr_0^{\GB}\D_{\rig}^{\dagger}(\rho_{\frakp}(m)) \arrow[r]\arrow[d, "\cong"] & W_{\frakp} \arrow[r] & \Gr_1^{\GB}\D_{\rig}^{\dagger}(\rho_{\frakp}(m))\arrow[r]\arrow[d, "\cong"] & 0\\
        & \calR_{F_{\frakp}, E}(\delta_{\frakp, 1}') & & \calR_{F_{\frakp}, E}(\delta_{\frakp, 0}')
    \end{tikzcd},
\end{equation} where \[
    \delta_{\frakp, n-i}' = \delta_{\frakp, n-i}\left(\mathrm{Nm}^{F_{\frakp}}_{\Q_p}z\right)^m |\mathrm{Nm}^{F_{\frakp}}_{\Q_p}z|^m = \left(\prod_{\sigma: F_{\frakp} \hookrightarrow \overline{\Q}_p} \sigma(z)^{-k_{\frakp, \sigma, i}+m} \right)|\mathrm{Nm}^{F_{\frakp}}_{\Q_p}z|^{n-i}.
\] By taking cohomology, we have the connecting homomorphism \[
    \partial: H^0(\calR_{F_{\frakp}, E}(\delta_{\frakp, 0}')) \rightarrow H^1(\calR_{F_{\frakp}, E}(\delta_{\frakp, 1}')) = H_{\st}^1(\calR_{F_{\frakp}, E}(\delta_{\frakp, 1}')),
\] where the equation follows from Lemma \ref{Lemma: H1st = H1 for a special character}. Denoted by $\alpha_{\delta_{\frakp, 1}'}^*$ and $\beta_{\delta_{\frakp, 1}'}^*$ the two classes in $H^1(\calR_{F_{\frakp, E}}(\delta_{\frakp, 1}'))$ in Lemma \ref{Lemma: unqiue number defined by a semistable extension}, we know from \emph{loc. cit.} that $\partial$ gives rise to a unique number $\calL(W_{\frakp})\in E$ such that \[
    \beta_{\delta_{\frakp, 1}'}^* + \calL(W_{\frakp}) \alpha_{\delta_{\frakp, 1}'}^* \in \image \partial.
\]

We claim that \begin{equation}\label{eq: LGB as a product}
    \calL_{\GB}(\rho(m)) = \prod_{\frakp|p} \calL(W_{\frakp}).
\end{equation}
Note that, in the definition of $\calL_{\GB}(\rho(m))$, one studies the cohomology of $\Gr_1^{\GB}\D_{\rig}^{\dagger}(\rho_{\frakp}(m))$. However, we are now having cohomology classes in $H^1(\Gr_0^{\GB}\D_{\rig}^{\dagger}(\rho_{\frakp}(m)))$. To resolve this, we look at the short exact sequence \[
    \begin{tikzcd}[row sep = tiny, column sep = tiny]
        0 \arrow[r] & \left(\Gr_1^{\GB}\D_{\rig}(\rho_{\frakp}(m))\right)^{\vee}(\chi_{\cyc}) \arrow[r]\arrow[d, "\cong"] & W_{\frakp}^{\vee}(\chi_{\cyc}) \arrow[r] & \left(\Gr_0^{\GB}\D_{\rig}(\rho_{\frakp}(m))\right)^{\vee}(\chi_{\cyc}) \arrow[r] \arrow[d, "\cong"] & 0\\
        & \calR_{F_{\frakp}, E}(\kappa_{\frakp, 0}) && \calR_{F_{\frakp}, E}(\kappa_{\frakp, 1})
    \end{tikzcd}.
\] By \cite[Proposition 2.2.4]{Benois}, the Greenberg--Benois $\calL$-invariant computed by this exact sequence (at each $\frakp$) is the same as $\calL_{\GB}(\rho(m))$. Here, \[
    \kappa_{\frakp, i}(z) = \left(\prod_{\sigma: F_{\frakp} \hookrightarrow \overline{\Q}_p} \sigma(z)^{k_{\frakp, \sigma, n-i}-m+1} \right) |\mathrm{Nm}^{F_{\frakp}}_{\Q_p}z|^{1-i}
\] and we want to compute $\calL_{\GB}(\rho(m))$ using the cohomology of $\calR_{F_{\frakp}, E}(\kappa_{\frakp, 1})$.

By (B2), we have $k_{\frakp, \sigma, n-1}-m+1\leq 0$ and we let $u_{\frakp} := \min_{\sigma}\{ k_{\frakp, \sigma, n-1}-m+1\}$. By \cite[(2.8)]{Rosso}, there is an injection \[
    H^1(\calR_{\Q_p, E}(z^{u_{\frakp}})) \hookrightarrow H^1(\calR_{F_{\frakp}, E}(\kappa_{\frakp, 1})), \quad \begin{array}{l}
        x_{u_{\frakp}} \mapsto x_{k_{\frakp, \bullet, n-1}-m+1}   \\
        y_{u_{\frakp}} \mapsto y_{k_{\frakp, \bullet, n-1}-m+1}  
    \end{array}, 
\] where $x_{u_{\frakp}}$, $x_{k_{\frakp, \bullet, n-1}-m+1}$, $y_{u_{\frakp}}$, and $ y_{k_{\frakp, \bullet, n-1}-m+1}$ are as defined in \emph{loc. cit.}.\footnote{ For the convenience of the readers, we briefly recall the definitions of $x_{u_{\frakp}}$ and $y_{u_{\frakp}}$. The definitions for $x_{k_{\frakp, \bullet, n-1}-m+1}$ and $y_{k_{\frakp, \bullet, n-1}-m+1}$ are similar; we refer the readers to \cite[pp. 1233, 1234]{Rosso} for the precise definitions. Given $\alpha = (a,b)\in \calR_{\Q_p, E}(z^{u_{\frakp}})^{\oplus 2}$, one can define an extension $D_{\alpha}$ as in \eqref{eq: construction of an extension}, which defines a class $\mathrm{cl}(a,b)\in H^1(\calR_{\Q_p, E}(z^{u_{\frakp}}))$. We simplify the notation and write $e$ for the basis for $\calR_{\Q_p, E}(z^{u_{\frakp}})$, then $x_{u_{\frakp}} := \mathrm{cl}(t^{-u_{\frakp}}e, 0)$ and $y_{u_{\frakp}} = \log\chi_{\cyc}(\gamma)\mathrm{cl}(0, t^{-u_{\frakp}}e)$, where recall $\gamma$ is a (fixed) topological generator for $\Gamma = \Gal(\Q_p(\zeta_{p^{\infty}})/\Q_p)$ and $t=\log[\epsilon]\in B_{\dR}^+$. Here, $\epsilon\in \calO_{\C_p}^{\flat} = \varprojlim_{a\mapsto a^p}\calO_{\C_p}$ is defined by the fixed compatible system of primitive $p$-power roots of unity (see the beginning of \S \ref{subsection: (phi, Gamma)}) and we implicitly use the fact that certain subring of $\calR_{\Q_p}$ can be embedded into $B_{\dR}^+$ (see \cite[\S 1.2.2]{Benois}).} \footnote{ This injection comes from a natural injection $\calR_{\Q_p, E}(z^{u_{\frakp}}) \hookrightarrow \Ind^{\Gamma_{\Q_p}}_{\Gamma_{F_{\frakp}}} \calR_{F_{\frakp}, E}(\kappa_{\frakp, 1})$. By duality, we have a natural projection $\Ind^{\Gamma_{\Q_p}}_{\Gamma_{F_{\frakp}}}\calR_{F_{\frakp}, E}(\delta_{\frakp, 1}') \twoheadrightarrow \calR_{\Q_p, R}(z^{-u_{\frakp+1}}|z|)$ as well as $H^1(\calR_{F_{\frakp}, E}(\delta_{\frakp, 1}')) \twoheadrightarrow H^1(\calR_{\Q_p, E}(z^{-u_{\frakp}+1}|z|))$. According to \cite[Theorem 1.5.7]{Benois}, $x_{u_{\frakp}}$ (resp., $y_{u_{\frakp}}$) is dual to $\beta_{-u_{\frakp}+1}^*$ (resp., $-\alpha_{-u_{\frakp}+1}^*$). Thus, we may choose $v_{\delta_{\frakp, 1}'}\in \calD_{\st}(\calR_{F_{\frakp}, E}(\delta_{\frakp, 1}'))$ such that the corresponding $\alpha_{\delta_{\frakp, 1}'}^* = \cl(v_{\delta_{\frakp, 1}'}, 0,0) \mapsto \alpha_{-u_{\frakp}+1}^*$ and $\beta_{\delta_{\frakp, 1}'}^* = -\cl(0, 0, v_{\delta_{\frakp, 1}'})\mapsto \beta_{-u_{\frakp}+1}^*$.}
By the discussion on \cite[pp. 1238]{Rosso}, we have a commutative diagram \begin{equation}\label{eq: diagram computing GB L-inv explicitly}
    \begin{tikzcd}[column sep = tiny]
        & \scalemath{0.8}{ H^1\left(\bigoplus_{\frakp|p}D_{\frakp}^{\vee}(1-m), \Ind^{\Q}_{F}\rho^{\vee}(1-m)\right) }\arrow[rd, "\iota_c"]\arrow[ld, "\iota_f"']\arrow[d]\\
        \scalemath{0.8}{ \bigoplus_{\frakp|p} Ex_{k_{\frakp, \bullet, n-1}-m+1} } \arrow[r, hook] & \scalemath{0.8}{ \bigoplus_{\frakp|p} H^1(\calR_{F_{\frakp}, E}(\kappa_{\frakp, 1})) } & \scalemath{0.8}{ \bigoplus_{\frakp|p}Ey_{k_{\frakp, \bullet, n-1}-m+1} } \arrow[l, hook']
    \end{tikzcd}
\end{equation} where $\iota_c$ is an isomorphism. Moreover, [\emph{op. cit.}, Corollary 3.9] yields that \[
    \calL_{\GB}(\rho(m)) = \det(\iota_f \circ \iota_c^{-1}).
\]
In particular, if $\calL_{\frakp}\in E$ such that \begin{align*}
    \scalemath{0.8}{ \calL_{\frakp} x_{k_{\frakp, \bullet, n-1}-m+1}  + y_{k_{\frakp, \bullet, n-1}-m+1} \in \image \left(H^1\left(\bigoplus_{\frakp|p}D_{\frakp}^{\vee}(1-m), \Ind^{\Q_p}_{F}\rho^{\vee}(1-m)\right) \rightarrow H^1(\calR_{F_{\frakp}, E}(\kappa_{\frakp, 1})) \right) },
\end{align*} then \[
    \calL_{\GB}(\rho(m)) = \prod_{\frakp|p} \calL_{\frakp}. 
\]

By definition, $H^1\left(\bigoplus_{\frakp|p}D_{\frakp}^{\vee}(1-m), \Ind^{\Q}_{F}\rho^{\vee}(1-m)\right) \cong \bigoplus_{\frakp|p}\frac{H^1(W_{\frakp}^{\vee}(\chi_{\cyc}))}{H_f^1(W_{\frakp}^{\vee}(\chi_{\cyc}))}$. The vertical morphism in \eqref{eq: diagram computing GB L-inv explicitly} is compatible with the natural morphism \[
    H^1(W_{\frakp}^{\vee}(\chi_{\cyc})) \rightarrow H^1(\calR_{F_{\frakp}, E}(\kappa_{\frakp, 1})),
\] induced from the short exact sequence \eqref{eq: dual short exact sequence}. Note that the exact sequence \[
    H^1(W_{\frakp}^{\vee}(\chi_{\cyc})) \rightarrow H^1(\calR_{F_{\frakp}, E}(\kappa_{\frakp, 1})) \xrightarrow{\partial} H^2(\calR_{F_{\frakp}, E}(\kappa_{\frakp, 0}))
\] is dual to the exact sequence \[
    H^0(\calR_{F_{\frakp}, E}(\delta_{\frakp, 0}')) \xrightarrow{\partial} H^1(\calR_{F_{\frakp}, E}(\delta_{\frakp, 1}')) \rightarrow H^1(W_{\frakp}).
\]
We have $\partial(\calL_{\frakp} x_{k_{\frakp, \bullet, n-1}-m+1} + y_{k_{\frakp, \bullet, n-1}-m+1}) = 0 \in H^2(\calR_{F_{\frakp}, E}(\kappa_{\frakp, 0}))$ and $\beta_{\delta_{\frakp, 1}'}^* + \calL(W_{\frakp}) \alpha_{\delta_{\frakp, 1}'}^* = 0 \in H^1(W_{\frakp})$. Moreover, using the relation between $x_{k_{\frakp, \bullet, n-1}-m+1}$ (resp., $y_{k_{\frakp, \bullet, n-1}-m+1}$) and $\beta_{\delta_{\frakp, 1}'}^*$ (resp., $-\alpha_{\delta_{\frakp, 1}'}^*$), one sees that \[
    \calL_{\frakp} = \calL(W_{\frakp}),
\] which concludes our claim.

\vspace{2mm}

\noindent \textbf{Step 3. } Conclusion. 

By construction, $W_{\frakp}$ defines a class (see \eqref{eq: dual short exact sequence})\begin{align*}
    \cl(W_{\frakp}) & \in \mathrm{Ext}_{(\varphi, \Gamma)}^1(\calR_{F_{\frakp}, E}(\delta_{\frakp, 0}'), \calR_{F_{\frakp}, E}(\delta_{\frakp, 1}')) \\
    & \cong \mathrm{Ext}_{(\varphi, \Gamma)}^1(\calR_{F_{\frakp}, E}, \calR_{F_{\frakp}, E}(\delta_{\frakp, 1}'\delta_{\frakp, 0}'^{-1})) \\
    & \cong H^1(\calR_{F_{\frakp}, E}(\delta_{\frakp, 1}\delta_{\frakp, 0}^{-1})).
\end{align*} Note that, as classes in $H^1(\calR_{F_{\frakp}, E}(\delta_{\frakp, 1}\delta_{\frakp, 0}^{-1}))$, we have \[
    \cl(W_{\frakp}) = \cl(\widetilde{W}_{\frakp}).
\] Unwinding everything, we have \[
    c\cl(\calL(W_{\frakp})v_{\frakp, 1}, 0, -v_{\frakp, 1}) = \cl(W_{\frakp}) = \cl(\widetilde{W}_{\frakp})  \stackrel{\eqref{eq: main computation in Step 1}}{=} \cl(-\calL_{\FM}(\rho_{\frakp})v_{\frakp, 1}, 0, v_{\frakp, 1})
\] for some $c\in E$. In particular, we conclude that \[     
    \calL_{\FM}(\rho_{\frakp}) = -\calL(W_{\frakp})
\] and so, \[
    \calL_{\GB}(\rho(m)) = \prod_{\frakp|p} -\calL_{\FM}(\rho_{\frakp})
\]
by \eqref{eq: LGB as a product}.

\end{proof}

\printbibliography[heading=bibintoc]

\vspace{10mm}

\begin{tabular}{l}
    School of Mathematics and Statistics\\
    University College Dublin\\
    Belfield Dublin 4, Ireland\\
    \textit{E-mail address: }\texttt{ju-feng.wu@ucd.ie}
\end{tabular}

\end{document}